\title[On floors and ceilings of the $k$-Catalan arrangement]{On floors and ceilings of the $k$-Catalan arrangement}
\author{Marko Thiel}
\newtheorem{theorem}{Theorem}
\numberwithin{theorem}{section}
\newtheorem{lemma}[theorem]{Lemma}
\newtheorem{proposition}[theorem]{Proposition}
\newtheorem{claim}{Claim}
\newtheorem{corollary}[theorem]{Corollary}
\newtheorem*{theorem*}{Theorem}
\newtheorem*{corollary*}{Corollary}
\def\mfl{U(M)}
\def\mcl{L(M)}
\def\I{\mathcal{I}}
\def\J{\mathcal{J}}
\def\sup{\mathsf{supp}}
\begin{document}

\begin{abstract} 
The set of dominant regions of the $k$-Catalan arrangement of a crystallographic root system $\Phi$ is a well-studied object enumerated by the Fu{\ss}-Catalan number $Cat^{(k)}(\Phi)$.
It is natural to refine this enumeration by considering floors and ceilings of dominant regions.
A conjecture of Armstrong states that counting dominant regions by their number of floors of a certain height gives the same distribution as counting dominant regions by their number of ceilings of the same height.
We prove this conjecture using a bijection that provides even more refined enumerative information.

\end{abstract}
\maketitle

\section{Introduction}
Let $\Phi$ be a crystallographic root system of rank $n$ with simple system $S$, positive system $\Phi^+$, and ambient vector space $V$. For background on root systems see \cite{humphreys90reflection}. For $k$ a positive integer, we define the \emph{$k$-Catalan arrangement} of $\Phi$ as the hyperplane arrangement given by the hyperplanes $H_{\alpha}^r=\{x\in V\mid\langle x,\alpha\rangle=r\}$ for $\alpha\in\Phi$ and $r\in\{0,1,\ldots,k\}$.
The complement of this arrangement falls apart into connected components which we call the \emph{regions} of the arrangement.
Those regions $R$ that have $\langle x,\alpha\rangle>0$ for all $\alpha\in\Phi^+$ and all $x\in R$ we call \emph{dominant}. The number of dominant regions of the $k$-Catalan arrangement equals the Fu{\ss}-Catalan number $Cat^{(k)}(\Phi)$ \cite{athanasiadis04generalized} of $\Phi$. This number remains somewhat mysterious, in the sense that it also counts other objects in combinatorics, like the set of $k$-divisible noncrossing partitions $NC^{(k)}(\Phi)$ of $\Phi$ \cite[Theorem 3.5.3]{armstrong09generalized} and the number of facets of the $k$-generalised cluster complex $\Delta^{(k)}(\Phi)$ of $\Phi$ \cite[Proposition 8.4]{fomin05generalized}, but no uniform proof of this fact is known, that is every known proof of this fact appeals to the classification of irreducible crystallographic root systems.\\
\\
For a dominant region $R$ of the $k$-Catalan arrangement, we call those hyperplanes that support a facet of $R$ the \emph{walls} of $R$. Those walls of $R$ which do not contain the origin and have the origin on the same side as $R$ we call the \emph{ceilings} of $R$. The walls of $R$ that do not contain the origin and separate $R$ from the origin are called its \emph{floors}.
We say a hyperplane is of \emph{height} $r$ if it is of the form $H_{\alpha}^r$ for $\alpha\in\Phi^+$.\\
\\
One reason why floors and ceilings of dominant regions are interesting is that they give a more refined enumeration of the dominant regions of the $k$-Catalan arrangement of $\Phi$ that corresponds to refined enumerations of other objects counted by the Fu{\ss}-Catalan number $Cat^{(k)}(\Phi)$.
More precisely, the number of dominant regions in the $k$-Catalan arrangement of $\Phi$ that have exactly $j$ floors of height $k$ equals the Fu{\ss}-Narayana number $Nar^{(k)}(\Phi,j)$ \cite[Proposition 5.1]{athanasiadis05refinement} \cite[Theorem 1]{thiel13hf}, which also counts the number of $k$-divisible noncrossing partitions of $\Phi$ of rank $j$ \cite[Definition 3.5.4]{armstrong09generalized}, as well as equalling the $(n-j)$-th entry of the $h$-vector of the $k$-generalised cluster complex $\Delta^{(k)}(\Phi)$ \cite[Theorem 10.2]{fomin05generalized}.
Similarly, the number of bounded dominant regions of the $k$-Catalan arrangement of $\Phi$ that have exactly $j$ ceilings of height $k$ equals the $(n-j)$-th entry of the $h$-vector of the positive part of $\Delta^{(k)}(\Phi)$ \cite[Conjecture 1.2]{athanasiadis06cluster} \cite[Corollary 5]{thiel13hf}.\\
\\
For the special case where $\Phi$ is of type $A_{n-1}$, more is known. For example, there is an explicit bijection between the set of dominant regions of the $k$-Catalan arrangement of $\Phi$ and the set of facets of the cluster complex of $\Phi$ \cite{fishel13facets}.
There is also an enumeration of those dominant regions that have a fixed hyperplane as a floor \cite{fishel13fixed}. In contrast to those results, all results in this paper are stated and proven uniformly for all crystallographic root systems without appeal to the classification.\\
\\
If $M$ is any set of hyperplanes of the $k$-Catalan arrangement, let $\mfl$ be the set of dominant regions $R$ of the $k$-Catalan arrangement such that all hyperplanes in $M$ are floors of $R$.
Similarly, let $\mcl$ be the set of dominant regions $R'$ of the $k$-Catalan arrangement such that all hyperplanes in $M$ are ceilings of $R'$. 
Use the standard notation $[n]:=\{1,2,\ldots,n\}$.
Then we have the following theorem.
\begin{theorem}\label{bij}
 For any set $M=\{H_{\alpha_1}^{i_1},H_{\alpha_2}^{i_2},\ldots,H_{\alpha_m}^{i_m}\}$ of $m$ hyperplanes with $i_j\in [k]$ and $\alpha_j\in\Phi^+$ for all $j\in[m]$, there is an explicit bijection $\Theta$ from $\mfl$ to $\mcl$.
\end{theorem}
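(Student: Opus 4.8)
First I would recall the standard combinatorial model for dominant regions of the $k$-Catalan arrangement. A dominant region $R$ is determined by the data of, for each positive root $\alpha$, the unique integer $r_\alpha(R) \in \{0,1,\ldots,k\}$ with $r_\alpha(R) < \langle x, \alpha\rangle < r_\alpha(R)+1$ for $x\in R$ (with the convention that $r_\alpha(R) = k$ means $\langle x,\alpha\rangle > k$). These functions are exactly the \emph{geometric chains of ideals} (or equivalently the $k$-chains in the root poset satisfying Shi-type conditions) classified by Athanasiadis; the key point is that $R\mapsto (r_\alpha(R))_{\alpha\in\Phi^+}$ is injective, and a hyperplane $H_\alpha^i$ with $i\in[k]$ is a floor of $R$ iff $r_\alpha(R)=i$ and removing that constraint (relaxing to $\langle x,\alpha\rangle > i-1$ on that one root, propagated) still gives a region, while it is a ceiling iff $r_\alpha(R)=i-1$ and the analogous upward relaxation works. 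I would phrase both ``floor'' and ``ceiling'' membership as a purely order-theoretic condition on the chain of ideals, so that the whole problem becomes a statement about chains in the root poset.

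**The bijection.** The plan is to build $\Theta$ by a single ``flip'' move performed simultaneously at each of the $m$ prescribed hyperplanes. Informally: given $R\in\mfl$, for each $j$ the hyperplane $H_{\alpha_j}^{i_j}$ being a floor means $R$ sits just on the far side of it; I want to reflect the relevant local data so that in $\Theta(R)$ the same hyperplane becomes a ceiling, i.e. $\Theta(R)$ sits just on the near side. Concretely, on the level of the functions $r_\alpha$, I expect $\Theta$ to decrement $r_{\alpha_j}$ from $i_j$ to $i_j-1$ for each $j$, while compensating on the roots comparable to the $\alpha_j$ so that the result is still a legal geometric chain of ideals. The natural guess is that $\Theta$ is an involution-like map: doing the floor-to-ceiling flip, then the ceiling-to-floor flip at the same hyperplanes, returns $R$. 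I would first treat $m=1$ carefully — exhibit the single-hyperplane flip, check it lands in $\mcl$, and check its inverse is the reverse flip — and then argue the $m$ flips at distinct hyperplanes commute and can be composed, using that the $\alpha_j$ with their heights are forced to be pairwise incomparable-in-the-relevant-sense whenever they are simultaneously floors (resp. ceilings) of one region.

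**More refined information.** The abstract promises ``even more refined enumerative information'', so I would arrange that $\Theta$ preserves additional statistics — most naturally, the number of floors of each height $r$ for $r\neq$ the prescribed heights, or better, that it induces a bijection refining the one behind \cite[Proposition 5.1]{athanasiadis05refinement}. I would state this as a corollary: summing over all singletons $M=\{H_\alpha^k\}$ recovers Armstrong's conjecture that the distribution of dominant regions by number of height-$k$ floors equals the distribution by number of height-$k$ ceilings; and the general $M$ version gives the joint refinement. The cleanest way is to make $\Theta$ preserve the entire function $r_\bullet$ outside the ``cone'' of roots affected by the flips, and to describe the affected cone explicitly.

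**Main obstacle.** The hard part will be verifying that the flipped data is again a valid geometric chain of ideals — i.e. that after decrementing $r_{\alpha_j}$ and adjusting comparable roots, all of Athanasiadis's defining inequalities (the conditions that $\langle x,\alpha\rangle + \langle x,\beta\rangle$ and $\langle x, \alpha+\beta\rangle$ interleave correctly for $\alpha,\beta,\alpha+\beta\in\Phi^+$, plus the boundedness/ideal conditions) are preserved — and that the flip is well-defined (the ``comparable roots to adjust'' are determined, not a choice). This is exactly where the floor/ceiling hypothesis must be used: being a floor constrains the local shape of the chain enough that the flip is forced and legal. I would isolate this as a lemma (``the flip sends $\mfl$ bijectively to $\mcl$ in the $m=1$ case'') and expect its proof to be a somewhat delicate but finite case analysis on the mutual positions of $\alpha$, $\beta$ with $\alpha+\beta$ a root, organized by which of the three roots carry the prescribed height. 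Commutativity for general $m$ should then be comparatively routine once the single-flip picture is nailed down.
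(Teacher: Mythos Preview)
Your plan diverges from the paper's and has a real gap at the step ``reduce to $m=1$ and then compose commuting flips''. In the paper the bijection is not built one hyperplane at a time: given $R\in U(M)$ with minimal alcove $A$, the reflections $s_{\alpha_1}^{i_1},\ldots,s_{\alpha_m}^{i_m}$ are reflections through facets of $A$, hence generate a finite parabolic subgroup $W'=wJw^{-1}$ of the affine Weyl group, and $\Theta(R)$ is defined to be the region containing $w_0'(A)$, where $w_0'$ is the longest element of $W'$. For $m=1$ this is indeed a single reflection, matching your picture; but for $m\geq 2$ the element $w_0'$ is in general much longer than $m$, and applying it crosses \emph{all} reflecting hyperplanes of $W'$, not only the $m$ hyperplanes in $M$. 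Your proposed composition of $m$ single flips is not well-defined (after one flip the remaining hyperplanes in $M$ need not be floors of the intermediate region) and in any case the reflections $s_{\alpha_j}^{i_j}$ do not commute in general, since walls of an alcove meet at arbitrary Coxeter angles. So the commutativity argument cannot work as stated.

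There is a second gap in your sketch of the inverse. You expect the ceiling-to-floor direction to be ``the reverse flip'', but regions need not be bounded, and an unbounded region has no maximal alcove to reflect. The paper spends all of Section~3 constructing, for each dominant region $R'$, a \emph{pseudomaximal} alcove $B\subseteq R'$ whose ceilings are exactly the ceilings of $R'$ (Theorem~\ref{ind=ceil}); only with this alcove in hand can one set $\Psi(R')$ to be the region containing $w_0'(B)$ and verify $\Theta$ and $\Psi$ are mutually inverse. Your combinatorial ``decrement $r_{\alpha_j}$ and compensate on comparable roots'' description also conflates alcove coordinates $r(A,\alpha)$ with region data (the geometric chains $\I,\J$); the floor/ceiling conditions are the rank-$r$ indecomposability conditions on $\J$ and $\I$, which are more delicate than a single coordinate value.
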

See Figure \ref{theta} for an example. From this theorem, we obtain some enumerative corollaries. In particular, let $fl_r(l)$ be the number of dominant regions in the $k$-Catalan arrangement that have exactly $l$ floors of height $r$, and let $cl_r(l)$ be the number of dominant regions that have exactly $l$ ceilings of height $r$ \cite[Definition 5.1.23]{armstrong09generalized}.
We deduce the following conjecture of Armstrong.
\begin{figure}\label{theta}
\begin{center}
 \resizebox*{12.4cm}{!}{\includegraphics{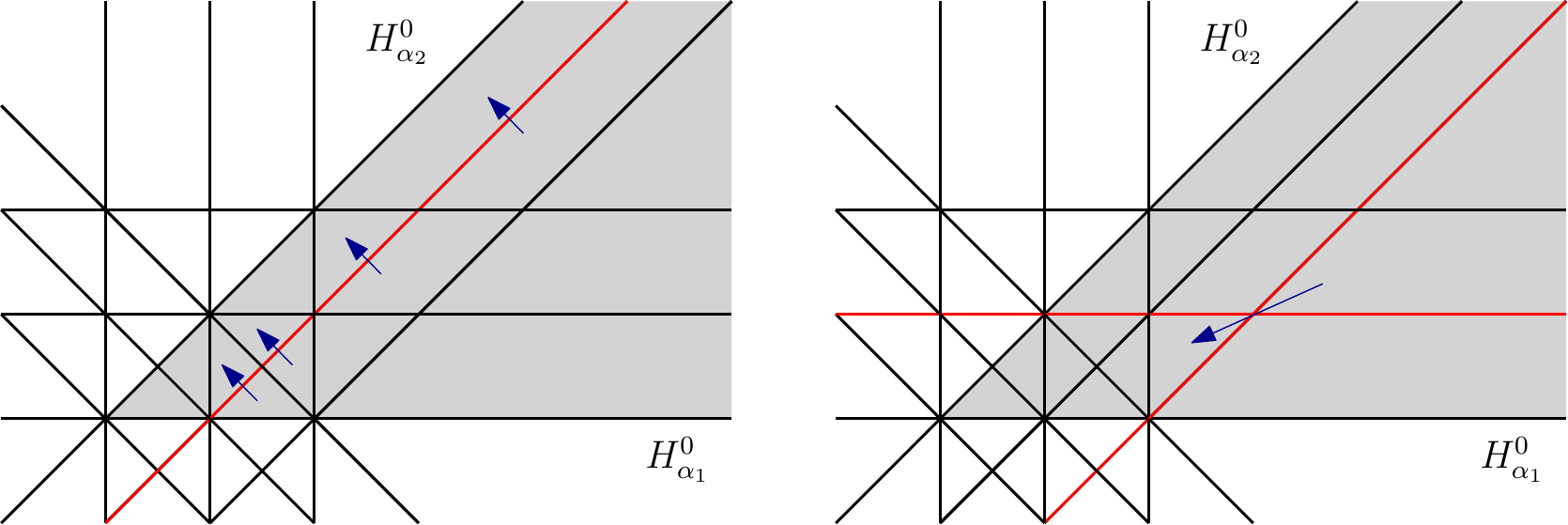}}\\
\end{center}
\caption{The bijection $\Theta$ for the 2-Catalan arrangement of the root system of type $B_2$, for $M=\{H_{\alpha_2}^1\}$ and for $M=\{H_{\alpha_1}^1,H_{\alpha_2}^2\}$. The dominant chamber is shaded in grey.}
\end{figure}

\begin{corollary}[\protect{\cite[Conjecture 5.1.24]{armstrong09generalized}}]\label{arm}
  We have $fl_r(l)=cl_r(l)$ for all $1\leq r\leq k$ and $0\leq l\leq n$.
\end{corollary}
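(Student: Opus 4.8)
The plan is to deduce Corollary \ref{arm} from Theorem \ref{bij} by a routine inclusion--exclusion (equivalently, binomial inversion) argument; all the real content lies in Theorem \ref{bij} itself. Fix $r$ with $1\le r\le k$ and let $\mathcal{H}_r:=\{H_{\alpha}^r\mid\alpha\in\Phi^+\}$ be the set of hyperplanes of height $r$ in the $k$-Catalan arrangement; this set is finite since $\Phi^+$ is finite. For a dominant region $R$ write $\mathrm{Fl}_r(R)$ for its set of floors of height $r$ and $\mathrm{Cl}_r(R)$ for its set of ceilings of height $r$. For a subset $M\subseteq\mathcal{H}_r$, every hyperplane of $M$ has height $r$, so $R\in\mfl$ exactly when $M\subseteq\mathrm{Fl}_r(R)$, and $R'\in\mcl$ exactly when $M\subseteq\mathrm{Cl}_r(R')$; thus $|\mfl|=\#\{R\mid M\subseteq\mathrm{Fl}_r(R)\}$ and $|\mcl|=\#\{R'\mid M\subseteq\mathrm{Cl}_r(R')\}$.

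Next, fix $i\ge 0$, sum these counts over all $i$-element subsets $M\subseteq\mathcal{H}_r$, and interchange the order of summation: a region $R$ is counted by exactly those $M$ that are $i$-element subsets of $\mathrm{Fl}_r(R)$, so
\[
\sum_{M\subseteq\mathcal{H}_r,\ |M|=i}|\mfl|=\sum_{R}\binom{|\mathrm{Fl}_r(R)|}{i}=\sum_{l\ge 0}\binom{l}{i}fl_r(l).
\]
The same computation with ceilings in place of floors yields $\sum_{M\subseteq\mathcal{H}_r,\,|M|=i}|\mcl|=\sum_{l\ge 0}\binom{l}{i}cl_r(l)$. By Theorem \ref{bij}, $|\mfl|=|\mcl|$ for each individual $M$, so the two left-hand sides coincide, giving
\[
\sum_{l\ge 0}\binom{l}{i}fl_r(l)=\sum_{l\ge 0}\binom{l}{i}cl_r(l)\qquad\text{for every }i\ge 0.
\]

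Finally, since only finitely many terms of $(fl_r(l))_l$ and $(cl_r(l))_l$ are nonzero, this triangular system can be solved by the standard binomial inversion formula, which expresses $fl_r(l)$ (resp.\ $cl_r(l)$) solely in terms of the numbers $\sum_{|M|=i}|\mfl|$ (resp.\ $\sum_{|M|=i}|\mcl|$) for $i\ge l$; equivalently, the lower-triangular matrix $\big(\binom{l}{i}\big)_{i,l\ge 0}$ with ones on the diagonal is invertible. Since the corresponding sums agree, we conclude $fl_r(l)=cl_r(l)$ for all $l\ge 0$, in particular for $0\le l\le n$. The only hypothesis of Theorem \ref{bij} that needs checking is that each $M\subseteq\mathcal{H}_r$ is an admissible input, which is immediate since $M$ consists of distinct hyperplanes $H_{\alpha}^r$ with $\alpha\in\Phi^+$ and $r\in[k]$. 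Thus the corollary presents no real obstacle; the entire difficulty is concentrated in the construction of the bijection $\Theta$ in Theorem \ref{bij}.
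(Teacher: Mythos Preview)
Your argument is correct and follows essentially the same approach as the paper: both deduce the corollary from Theorem~\ref{bij} via a routine inclusion--exclusion/binomial inversion. The only difference is organizational---the paper first establishes the stronger intermediate statements $|U_=(M)|=|L_=(M)|$ (Corollary~\ref{inex}) and equality of the joint height-profile distributions (Corollary~\ref{sum}) and then specializes, whereas you restrict to height-$r$ hyperplanes from the start and invert directly.
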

Specialising to the $k=1$ case, we also give a geometric interpretation in terms of dominant regions of the Catalan arrangement of the Panyushev complement on ideals in the root poset of $\Phi$.
\section{Definitions}
For this section and the next one, suppose that $\Phi$ is irreducible. Define the \emph{affine Coxeter arrangement} of $\Phi$ as the union of all hyperplanes of the form $H_{\alpha}^r=\{x\in V\mid\langle x,\alpha\rangle=r\}$ for $\alpha\in\Phi$ and $r\in\mathbb{Z}$.
Then the complement of this falls apart into connected components, all of which are congruent open $n$-simplices, called \emph{alcoves}.
The \emph{affine Weyl group} $W_a$ generated by all the reflections through hyperplanes of the form $H_{\alpha}^r$ for $\alpha\in\Phi$ and $r\in\mathbb{Z}$ is a Coxeter group, with generating set $S_a=\{s_0,s_1,\ldots,s_n\}$, where $s_1,\ldots,s_n$ are the reflections in the hyperplanes orthogonal to the simple roots of $\Phi$ and $s_0$ is the reflection in $H^1_{\tilde{\alpha}}$, where $\tilde{\alpha}$ is the highest root of $\Phi$.\\
\\
The group $W_a$ acts simply transitively on the alcoves, so if we define the \emph{fundamental alcove} as
$$A_{\circ}=\{x\in V\mid \langle x,\alpha_i\rangle >0\text{ for all }\alpha_i\in S, \langle x,\tilde{\alpha}\rangle<1\}\text{,}$$
then every alcove $A$ can be written as $w(A_{\circ})$ for a unique $w\in W_a$.\\
\\
Clearly any alcove is contained in exactly one region $R$ of the $k$-Catalan arrangement of $\Phi$. For any alcove $A$ in the affine Coxeter arrangement of $\Phi$ and $\alpha\in\Phi^+$, there exists a unique integer $r$ with $r-1<\langle x,\alpha\rangle<r$ for all $x\in A$. We denote this integer by $r(A,\alpha)$.\\
\\
Suppose that for each $\alpha\in\Phi^+$ we are given a positive integer $r_{\alpha}$. The following is due to Shi \cite[Theorem 5.2]{shi87alcoves}.
\begin{lemma}[\protect{\cite[Lemma 2.3]{athanasiadis06cluster}}]\label{aff}
 There is an alcove $A$ with $r(A,\alpha)=r_{\alpha}$ for all $\alpha\in\Phi^+$ if and only if $r_{\alpha}+r_{\beta}-1\leq r_{\alpha+\beta}\leq r_{\alpha}+r_{\beta}$ whenever $\alpha,\beta,\alpha+\beta\in\Phi^+$.
\end{lemma}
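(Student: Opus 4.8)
The ``only if'' direction is elementary; the ``if'' direction is Shi's theorem, whose essential content I would isolate via linear programming duality. \emph{Only if:} Let $A$ be an alcove and put $r_\alpha=r(A,\alpha)$, so that $r_\alpha-1<\langle x,\alpha\rangle<r_\alpha$ for all $x\in A$ and all $\alpha\in\Phi^+$. If $\alpha,\beta,\alpha+\beta\in\Phi^+$, adding the inequalities for $\alpha$ and for $\beta$ gives $r_\alpha+r_\beta-2<\langle x,\alpha+\beta\rangle<r_\alpha+r_\beta$ on $A$; since $\langle x,\alpha+\beta\rangle$ is never an integer on $A$, the integer $r_{\alpha+\beta}$ equals $\lceil\langle x,\alpha+\beta\rangle\rceil$ and so lies in $\{r_\alpha+r_\beta-1,\,r_\alpha+r_\beta\}$, which is the claim. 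For the converse it suffices to produce one point $x\in V$ with $r_\alpha-1<\langle x,\alpha\rangle<r_\alpha$ for every $\alpha\in\Phi^+$: such an $x$ pairs non-integrally with every root, hence lies in a unique alcove $A$, and then $r(A,\alpha)$, the unique integer $r$ with $r-1<\langle x,\alpha\rangle<r$, equals $r_\alpha$. So the task becomes to show that the open polyhedron $P=\{x\in V:\ r_\alpha-1<\langle x,\alpha\rangle<r_\alpha\ \text{for all }\alpha\in\Phi^+\}$ is nonempty. (Writing $\gamma=\sum_ic_i^\gamma\alpha_i$, $\langle x,\alpha_i\rangle=r_{\alpha_i}-s_i$ and $D(\gamma)=\sum_ic_i^\gamma r_{\alpha_i}-r_\gamma$, the hypotheses become $D(\alpha_i)=0$ and $D(\beta)+D(\delta)\le D(\beta+\delta)\le D(\beta)+D(\delta)+1$, whence $0\le D(\gamma)\le\mathrm{ht}(\gamma)-1$ by induction on height, and the goal becomes to find $s\in(0,1)^n$ with $\lfloor\sum_ic_i^\gamma s_i\rfloor=D(\gamma)$ for all $\gamma$. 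This reformulation is convenient but not needed for what follows.)

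To show $P\neq\emptyset$, note that every inequality defining $P$ has the form $\langle x,\gamma\rangle<c_\gamma$ with $\gamma$ ranging over all of $\Phi$ (here $c_\gamma=r_\gamma$ for $\gamma\in\Phi^+$ and $c_{-\gamma}=1-r_\gamma$). By Motzkin's transposition theorem, $P$ is empty precisely when there is a nonzero $\lambda\colon\Phi\to\mathbb{R}_{\ge0}$ with $\sum_\gamma\lambda_\gamma\gamma=0$ and $\sum_\gamma\lambda_\gamma c_\gamma\le0$. Cancelling $\lambda_\gamma$ against $\lambda_{-\gamma}$ wherever both are positive, which strictly lowers $\sum_\gamma\lambda_\gamma c_\gamma$ (since $c_\gamma+c_{-\gamma}=1$) and cannot make $\lambda$ vanish, one may assume $\lambda_\gamma\lambda_{-\gamma}=0$ for all $\gamma$; then $\mu_\gamma:=\lambda_\gamma-\lambda_{-\gamma}$ (for $\gamma\in\Phi^+$) is a nonzero signed relation $\sum_{\gamma\in\Phi^+}\mu_\gamma\gamma=0$ with $\sum_{\mu_\gamma>0}\mu_\gamma r_\gamma\le\sum_{\mu_\gamma<0}|\mu_\gamma|(r_\gamma-1)$. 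The two hypothesis inequalities say exactly that no elementary relation $\alpha+\beta-(\alpha+\beta)=0$ (nor its negative) is ``offending'' in this sense, so it remains to rule out offending signed relations among positive roots in general.

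This passage from the pairwise conditions to the full system is where I expect the main difficulty to lie; it is the content of Shi's Theorem~5.2. One natural attack is to take an offending relation $\mu$ of minimal support: applying a strictly dominant linear functional to $\sum_\gamma\mu_\gamma\gamma=0$ forces $\mu$ to have both a positive and a negative coefficient, and then, using that every positive root of height $\ge2$ is a sum of two positive roots, one peels off from $\mu$ an elementary triangle relation of matching sign, shrinking the support while the relevant pairwise inequality controls the peeled-off part. Alternatively one can induct on $\mathrm{rank}\,\Phi$, passing to the parabolic subsystem $\Phi_{S\setminus\{\alpha_n\}}$, realizing its restricted data by a point $x'$ and then choosing $\langle x,\alpha_n\rangle$; by one-dimensional Helly this last choice again reduces to pairwise compatibility, now among roots of nonzero $\alpha_n$-coefficient, where one exploits the freedom to move $x'$ inside the sub-polyhedron. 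Either way the crux is precisely \cite[Theorem 5.2]{shi87alcoves} (see also \cite[Lemma 2.3]{athanasiadis06cluster}), which in the final write-up I would either reduce to along one of these lines or simply cite.
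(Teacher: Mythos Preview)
The paper gives no proof of this lemma at all; it simply attributes it to Shi (his Theorem~5.2) via Athanasiadis and moves on. Your ``only if'' direction is correct and complete. For the ``if'' direction you set up a clean Motzkin--Farkas framework and correctly observe that the hypothesis rules out exactly the elementary triangle relations $\alpha+\beta-(\alpha+\beta)=0$ and their negatives; but, as you yourself say, passing from these to an arbitrary offending signed relation among positive roots is the entire content of Shi's theorem, and neither of your two sketched inductions is actually carried out. In particular, ``peeling off a triangle'' from a minimal-support relation need not shrink the support, since the two summands $\alpha,\beta$ of the root you decompose may well be new to the support; organising this descent so that some complexity measure genuinely drops is precisely the combinatorial work in Shi's argument. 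Since your last sentence proposes to cite Shi in the final write-up, your treatment and the paper's coincide in practice: both quote the result rather than prove it.
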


Define a partial order on $\Phi^+$ by 
$$\alpha\leq\beta\text{ if and only if } \beta-\alpha\in \langle S\rangle_{\mathbb{N}}\text{,}$$
that is, $\beta\geq\alpha$ if and only if $\beta-\alpha$ can be written as a linear combination of simple roots with nonnegative integer coefficients.
The set of positive roots $\Phi^+$ with this partial order is called the \emph{root poset}. A subset $I\subseteq\Phi^+$ is called an \emph{ideal} if for all $\alpha\in I$ and $\beta\leq\alpha$, also $\beta\in I$.
A subset $J\subseteq\Phi^+$ is called an \emph{order filter} if for all $\alpha\in J$ and $\beta\geq\alpha$, also $\beta\in J$.\\
\\
Suppose $\I=(I_1,I_2,\ldots,I_k)$ is an ascending (multi)chain of $k$ ideals in the root poset of $\Phi$, that is $I_1\subseteq I_2\subseteq\ldots\subseteq I_k$.
Setting $J_i=\Phi^+\backslash I_i$ for $i\in[k]$ and $\J=(J_1,J_2,\ldots,J_k)$ gives us the corresponding descending chain of order filters. That is, we have $J_1\supseteq J_2\supseteq\ldots\supseteq J_k$.
The ascending chain of ideals $\I$ and the corresponding descending chain of order filters $\J$ are both called \emph{geometric} if the following conditions are satisfied simultaneously.
\begin{enumerate}
 \item $(I_i+I_j)\cap\Phi^+\subseteq I_{i+j}\text{ for all }i,j\in\{0,1,\ldots,k\}\text{ with }i+j\leq k\text{, and}$
 \item $(J_i+J_j)\cap\Phi^+\subseteq J_{i+j}\text{ for all }i,j\in\{0,1,\ldots,k\}\text{.}$
\end{enumerate}
Here we set $I_0=\varnothing$, $J_0=\Phi^+$, and $J_i=J_k$ for $i>k$.
We call $\I$ and $\J$ \emph{positive} if $S\subseteq I_k$, or equivalently $S\cap J_k=\varnothing$.\\
\\
Let $R$ be a dominant region of the $k$-Catalan arrangement of $\Phi$. 
Define $\theta(R)=(I_1,I_2,\ldots,I_k)$ and $\phi(R)=(J_1,J_2,\ldots,J_k)$, where
$$I_i=\{\alpha\in\Phi^+\mid\langle x,\alpha\rangle<i\text{ for all }x\in R\}\text{ and}$$
$$J_i=\{\alpha\in\Phi^+\mid\langle x,\alpha\rangle>i\text{ for all }x\in R\}\text{,}$$
for $i\in\{0,1,\ldots,k\}$.
It is not difficult to verify that $\theta(R)$ is a geometric chain of ideals and that $\phi(R)$ is the corresponding geometric chain of order filters.\\
%
\\
For a geometric chain of ideals $\I=(I_1,I_2,\ldots,I_k)$, and $\alpha\in\Phi^+$, we define
$$r_{\alpha}(\I)=\text{min}\{r_1+r_2+\ldots+r_m\mid\alpha=\alpha_1+\alpha_2+\ldots+\alpha_m\text{ and }\alpha_i\in I_{r_i}\text{ for all }i\in[m]\}\text{,}$$
where we set $r_{\alpha}(\I)=\infty$ if $\alpha$ cannot be written as a linear combination of elements in $I_k$.
So $r_{\alpha}(\I)<\infty$ for all $\alpha\in\Phi^+$ if and only if $\I$ is positive.\\
\\
For a geometric chain of order filters $\J=(J_1,J_2,\ldots,J_k)$, and $\alpha\in\Phi^+$, we define
$$k_{\alpha}(\J)=\text{max}\{k_1+k_2+\ldots+k_m\mid\alpha=\alpha_1+\alpha_2+\ldots+\alpha_m\text{ and }\alpha_i\in J_{k_i}\text{ for all }i\in[m]\}\text{,}$$
where $k_i\in\{0,1,\ldots,k\}$ for all $i\in[m]$.\\
\\
It turns out that $\phi$ is a bijection from the set of dominant regions of the $k$-Catalan arrangement of $\Phi$ to the set of geometric chains of $k$ order filters in the root poset of $\Phi$ \cite[Theorem 3.6]{athanasiadis05refinement}.
Its inverse $\psi$ is the map sending a geometric chain of order filters $\J$ to the region $R$ of the $k$-Catalan arrangement containing the alcove $A$ with $r(A,\alpha)=k_{\alpha}(\J)+1$ for all $\alpha\in\Phi^+$. This alcove $A$ is called the \emph{minimal alcove} of $R$. Its floors are exactly the floors of $R$ \cite[Theorem 3.11]{athanasiadis05refinement}.\\
\\
Thus the map $\theta$ is a bijection from dominant regions $R$ of the $k$-Catalan arrangement to geometric chains of ideals $\I$. It restricts to a bijection between bounded dominant regions of the $k$-Catalan arrangement and positive geometric chains of ideals. 
The inverse of this restriction maps a positive geometric chain of ideals $\I$ to the bounded dominant region $R$ in the $k$-Catalan arrangement containing the alcove $B$ with $r(B,\alpha)=r_{\alpha}(\I)$ for all $\alpha\in\Phi^+$ \cite[Theorem 3.6]{athanasiadis06cluster}. This alcove $B$ is called the \emph{maximal alcove} of $R$. Its ceilings are exactly the ceilings of $R$ \cite[Theorem 3.11]{athanasiadis06cluster}.\\
\\
We call $\alpha\in\Phi^+$ a \emph{rank $r$ indecomposable element} \cite[Definition 3.8]{athanasiadis05refinement} of a geometric chain of order filters $\J=(J_1,J_2,\ldots,J_k)$ if $\alpha\in J_r$ and
\begin{enumerate}
 \item $k_{\alpha}(\J)=r$,
 \item $\alpha\notin J_i+J_j\text{ for }i+j=r$ and
 \item if $k_{\alpha+\beta}(\J)=t\leq k$ for some $\beta\in\Phi^+$ then $\beta\in J_{t-r}$.
\end{enumerate}
\vphantom{Fnord}
We have that $H^r_{\alpha}$ is a floor of $R$ if and only if $\alpha$ is a rank $r$ indecomposable element of the geometric chain of order filters $\J=\phi(R)$ \cite[Theorem 3.11]{athanasiadis05refinement}.\\
\\
We call $\alpha\in\Phi^+$ a \emph{rank $r$ indecomposable element} \cite[Definition 3.8]{athanasiadis06cluster} of a geometric chain of ideals $\I=(I_1,I_2,\ldots,I_k)$ if $\alpha\in I_r$ and
\begin{enumerate}
 \item $r_{\alpha}(\I)=r$,
 \item $\alpha\notin I_i+I_j\text{ for }i+j=r$ and
 \item if $r_{\alpha+\beta}(\I)=t\leq k$ for some $\beta\in\Phi^+$ then $\beta\in I_{t-r}$.
\end{enumerate}
\vphantom{Fnord}
We will soon see that $H^r_{\alpha}$ is a ceiling of $R$ if and only if $\alpha$ is a rank $r$ indecomposable element of the geometric chain of ideals $\I=\theta(R)$.
\section{Lemmas}
Our aim for this rather technical section is to prove the following theorem.
\begin{theorem}\label{ind=ceil}
 Let $R$ be a dominant region in the $k$-Catalan arrangement of $\Phi$, $\I=\theta(R)$ and $\alpha\in\Phi^+$.
 Then $R$ contains an alcove $B$ such that for all $r\in[k]$ the following are equivalent:
 \begin{enumerate}
  \item $H_{\alpha}^r$ is a ceiling of $R$,
  \item $\alpha$ is a rank $r$ indecomposable element of $\I$, and
  \item $H_{\alpha}^r$ is a ceiling of $B$.
 \end{enumerate}

\end{theorem}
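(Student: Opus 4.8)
The plan is to funnel everything through the equivalence $(1)\Leftrightarrow(2)$, which carries all of the content, and then to read off $(3)$ by choosing $B$ well. I would start from the elementary identity $r_\alpha(\I)=c_\alpha$ for all $\alpha\in I_k$, where $c_\alpha$ denotes the unique integer with $c_\alpha-1<\langle x,\alpha\rangle<c_\alpha$ for all $x\in R$: the inequality ``$\le$'' is the one-term decomposition $\alpha\in I_{c_\alpha}$, and ``$\ge$'' is convexity of $R$ (if $\alpha=\sum_j\alpha_j$ with $\alpha_j\in I_{r_j}$ then $\langle\cdot,\alpha\rangle<\sum_j r_j$ on $R$, so $\sum_j r_j\ge c_\alpha$). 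Since a ceiling $H_\alpha^r$ of $R$ forces $R$ into the slab $(r-1,r)$ on $\alpha$, and a rank $r$ indecomposable $\alpha$ of $\I$ has $r_\alpha(\I)=r$, both sides of $(1)\Leftrightarrow(2)$ entail $\alpha\in I_k$ and $r=c_\alpha$. So I may assume $\alpha\in I_k$ and $r=c:=c_\alpha$.

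I would then restate both conditions polyhedrally. Writing $R=\{x: c_\gamma-1<\langle x,\gamma\rangle<c_\gamma\ \forall\gamma\in I_k,\ \langle x,\gamma\rangle>k\ \forall\gamma\in J_k\}$ (a convex description, as $R$ is a region), the hyperplane $H_\alpha^c$ is a ceiling of $R$ precisely when it supports a facet of $R$ — the origin is automatically on the side $\langle\cdot,\alpha\rangle<c$ — equivalently when the inequality $\langle\cdot,\alpha\rangle<c$ is irredundant in this description (one checks that $H_\alpha^c$ is the bounding hyperplane of no other constraint, so ``redundant'' is the same as ``not a facet''). On the other side, using $r_\alpha(\I)=c$ and $r_{\alpha+\beta}(\I)=c_{\alpha+\beta}$ when $\alpha+\beta\in I_k$, the element $\alpha$ is rank $c$ indecomposable in $\I$ iff $(ii)$ $\alpha\notin I_i+I_j$ whenever $i+j=c$, and $(iii)$ $c_{\alpha+\beta}=c+c_\beta$ for every $\beta\in\Phi^+$ with $\alpha+\beta\in I_k$. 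The direction $(1)\Rightarrow(2)$ is then immediate in contrapositive form: a failure of $(ii)$, say $\alpha=\beta+\gamma$ with $\beta\in I_i$, $\gamma\in I_j$, $i+j=c$, forces $c_\beta+c_\gamma=c$, so $\langle\cdot,\alpha\rangle<c$ follows from $\langle\cdot,\beta\rangle<c_\beta$ and $\langle\cdot,\gamma\rangle<c_\gamma$; a failure of $(iii)$, some $c_{\alpha+\beta}<c+c_\beta$, makes $\langle\cdot,\alpha\rangle<c$ a consequence of $\langle\cdot,\alpha+\beta\rangle<c_{\alpha+\beta}$ and $\langle\cdot,\beta\rangle>c_\beta-1$. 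Either way the inequality is redundant, so $H_\alpha^c$ is not a ceiling.

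The reverse direction $(2)\Rightarrow(1)$ — equivalently, redundancy of $\langle\cdot,\alpha\rangle<c$ implies that $(ii)$ or $(iii)$ fails — is the technical heart, and I expect it to be the main obstacle. I would take a Farkas-type certificate for the redundancy, i.e. nonnegative coefficients expressing $\alpha$ through the roots of the remaining constraints together with the induced inequality on right-hand sides, and simplify it using: the root-poset fact that a sum of positive roots that is itself a positive root can be bracketed so that every partial sum is a positive root; the convexity bounds $c_{\beta+\gamma}\in\{c_\beta+c_\gamma-1,\,c_\beta+c_\gamma\}$; and the geometric conditions defining $\I$ and $\J$. The aim is to reduce an arbitrary certificate to one built from a single relation, yielding either a decomposition $\alpha=\beta+\gamma$ (so $\neg(ii)$) or a relation $\alpha+\beta\in I_k$ with $c_{\alpha+\beta}<c+c_\beta$ (so $\neg(iii)$). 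A point to control when $R$ is unbounded is that the constraints $\langle\cdot,\gamma\rangle>k$ for $\gamma\in J_k$ cannot enter an upper-bound certificate for $\langle\cdot,\alpha\rangle$, since any $\delta\ge\gamma\in J_k$ again lies in $J_k$ and carries no upper-bound constraint; once this is noted the argument runs as in the bounded case of \cite{athanasiadis06cluster}.

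Finally, with $(1)\Leftrightarrow(2)$ in hand, I would exhibit $B$ by two cases. If $H_\alpha^r$ is a ceiling of $R$ for no $r\in[k]$ — i.e. $\alpha\in J_k$, or $\alpha\in I_k$ but $H_\alpha^c$ is not a facet of $R$ — take $B$ to be any alcove contained in $R$, say the minimal alcove. Then $H_\alpha^r$ is a ceiling of $B$ for no $r\in[k]$: if $\alpha\in J_k$ then $r(B,\alpha)\ge k+1$ and the only $H_\alpha^r$ with $r\le k$ that can be a wall of $B$ is the floor $H_\alpha^k$; if $\alpha\in I_k$ then $r(B,\alpha)=c$, the hyperplane $H_\alpha^c$ is a wall of no alcove in $R$ because $\overline R\cap H_\alpha^c$ has dimension less than $n-1$, and $H_\alpha^r$ for $r\ne c$ can at best be a floor of $B$. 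If instead $\alpha\in I_k$ and $H_\alpha^c$ supports a facet $F$ of $R$ — so $(1)$ and, by the above, $(2)$ hold at $r=c$ — choose an alcove $B\subseteq R$ whose closure meets $F$ in an $(n-1)$-dimensional set (one exists, since $F$ is covered by the closures of the alcoves contained in $R$); then $H_\alpha^c$ is a wall of $B$, and as $B\subseteq R$ lies on the origin's side of it, $H_\alpha^c$ is a ceiling of $B$, while for $r\ne c$ the same dimension/floor argument shows $H_\alpha^r$ is not. In either case a single alcove $B$ gives $(3)\Leftrightarrow(1)$ for all $r\in[k]$, completing the proof.
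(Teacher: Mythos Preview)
Your argument for $(1)\Rightarrow(2)$ is essentially the paper's Lemma~\ref{ceil}, and your construction of an $\alpha$-dependent alcove $B$ giving $(1)\Leftrightarrow(3)$ is sound. The genuine gap is $(2)\Rightarrow(1)$. What you have written there is a hope, not a proof: you invoke a Farkas certificate with unspecified real coefficients and assert that it can be ``simplified'' to a single root relation witnessing $\neg(ii)$ or $\neg(iii)$, but you give no mechanism for this reduction. A Farkas certificate for the redundancy of $\langle\cdot,\alpha\rangle<c$ in the description of $R$ will in general involve many constraints with non-integral coefficients, and nothing you cite (the bracketing lemma, the bounds $c_{\beta+\gamma}\in\{c_\beta+c_\gamma-1,c_\beta+c_\gamma\}$, or the geometric axioms on $\I,\J$) converts such a certificate into a two-term decomposition of $\alpha$ or a single relation $\alpha+\beta\in I_k$ with the needed inequality. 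This is precisely the implication that is hard to establish directly, and the paper does \emph{not} attempt it.

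The paper's route is to choose $B$ \emph{first} and then prove the cycle $(1)\Rightarrow(2)\Rightarrow(3)\Rightarrow(1)$, so that $(2)\Rightarrow(1)$ comes for free once $(2)\Rightarrow(3)$ and $(3)\Rightarrow(1)$ are in hand. The alcove $B$ is the \emph{pseudomaximal} alcove of $R$: one extends $\I$ to a positive geometric chain $\underline{\I}$ of $k{+}1$ ideals (Lemma~\ref{Ibar}), so that the corresponding region $\underline{R}$ of the $(k{+}1)$-Catalan arrangement is bounded and contained in $R$, and takes $B$ to be the maximal alcove of $\underline{R}$. Then $(2)\Rightarrow(3)$ is a clean alcove computation: indecomposability of $\alpha$ in $\I$ propagates to $\underline{\I}$ (Lemma~\ref{ind}), whence Lemma~\ref{rind} gives exactly the Shi conditions (Lemma~\ref{aff}) needed to produce an alcove $B'$ adjacent to $B$ across $H_\alpha^r$, so $H_\alpha^r$ is a ceiling of $B$. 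Finally $(3)\Rightarrow(1)$ follows from the maximality of $B$ in $R$ in the sense of Lemma~\ref{max}: the reflection of $B$ through $H_\alpha^r$ cannot lie in $R$, so $H_\alpha^r$ is a wall of $R$. This replaces your missing Farkas reduction by an explicit alcove construction and Shi's characterisation.

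One further point: your $B$ depends on $\alpha$. The theorem as stated allows this, but the proof of Theorem~\ref{bij} applies Theorem~\ref{ind=ceil} to a region $R'\in L(M)$ and needs a \emph{single} alcove $B$ whose ceilings include every hyperplane in $M$ simultaneously. The pseudomaximal alcove is independent of $\alpha$ and supplies exactly that; your case-by-case choice does not.
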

It is already known that Theorem \ref{ind=ceil} holds for bounded dominant regions \cite[Theorem 3.11]{athanasiadis06cluster}.
In that case, we may take the alcove $B$ to be the maximal alcove of the bounded region $R$.\\
\\
Our approach to proving Theorem \ref{ind=ceil} is to note that when a region $R$ of the $k$-Catalan arrangement is subdivided into regions of the $(k+1)$-Catalan arrangement by hyperplanes of the form $H_{\alpha}^{k+1}$ for $\alpha\in\Phi^+$,
at least one of the resulting regions is bounded. We find a region $\underline{R}$ of the $(k+1)$-Catalan arrangement which, among the bounded regions of the $(k+1)$-Catalan arrangement that are contained in $R$, is the one furthest away from the origin.
We call the maximal alcove $B$ of $\underline{R}$ the \emph{pseudomaximal} alcove of $R$. It equals the maximal alcove of $R$ if $R$ is bounded. The alcove $B\subseteq R$ will be seen to satisfy the assertion of Theorem \ref{ind=ceil}. Instead of working directly with the dominant regions of the $k$- and $(k+1)$-Catalan arrangements, we usually phrase our results in terms of the corresponding geometric chains of ideals.
\begin{figure}[h]
\begin{center}
 \includegraphics{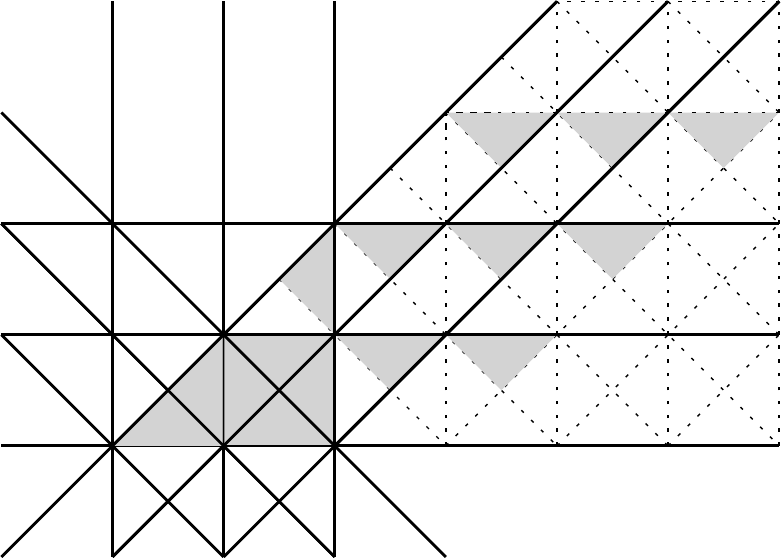}\\
\end{center}
\caption{The dominant regions of the 2-Catalan arrangement of the root system of type $B_2$ together with their pseudomaximal alcove, shaded in grey.}
\end{figure}

We require the following lemmas:
\begin{lemma}[\protect{\cite[Lemma 2.1 (ii)]{athanasiadis05refinement}}]\label{2.1}
 If $\alpha_1,\alpha_2,\ldots,\alpha_r\in\Phi$ and $\alpha_1+\alpha_2+\ldots+\alpha_r=\alpha\in\Phi$, then $\alpha_1=\alpha$ or there exists $i$ with $2\leq i\leq r$ such that $\alpha_1+\alpha_i\in\Phi\cup\{0\}$.
\end{lemma}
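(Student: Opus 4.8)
The plan is to prove the statement by induction on $r$, the inductive step removing one summand. The cases $r\le 2$ are immediate: for $r=1$ we have $\alpha_1=\alpha$, and for $r=2$ the sum $\alpha_1+\alpha_2$ equals $\alpha\in\Phi$, hence lies in $\Phi\cup\{0\}$. So I would fix $r\ge 3$, assume the statement for all smaller values of $r$, and argue by contradiction, supposing that $\alpha_1\ne\alpha$ and that $\alpha_1+\alpha_i\notin\Phi\cup\{0\}$ for every $i\in\{2,\ldots,r\}$. The only root-system input I use is the standard fact (see, e.g., \cite{humphreys90reflection}) that, for a fixed inner product $(\cdot,\cdot)$ on $V$ invariant under the Weyl group of $\Phi$ and roots $\beta,\gamma\in\Phi$, one has $\beta+\gamma\in\Phi\cup\{0\}$ whenever $(\beta,\gamma)<0$ and $\beta-\gamma\in\Phi\cup\{0\}$ whenever $(\beta,\gamma)>0$.

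First I would note that $(\alpha_1,\alpha_i)\ge 0$ for all $i\in\{2,\ldots,r\}$: otherwise $\alpha_1+\alpha_i\in\Phi\cup\{0\}$ by the fact above, against our assumption. Summing, $(\alpha_1,\alpha)=(\alpha_1,\alpha_1)+\sum_{i=2}^r(\alpha_1,\alpha_i)\ge(\alpha_1,\alpha_1)>0$. The crucial point is then the strict inequality $(\alpha,\alpha)>(\alpha_1,\alpha)$. Since $\alpha\ne\alpha_1$, the vector $\alpha-\alpha_1$ is nonzero, so $0<(\alpha-\alpha_1,\alpha-\alpha_1)=(\alpha,\alpha)-2(\alpha_1,\alpha)+(\alpha_1,\alpha_1)$; combining this with $(\alpha_1,\alpha)\ge(\alpha_1,\alpha_1)$ and cancelling $(\alpha_1,\alpha_1)$ gives $(\alpha,\alpha)>(\alpha_1,\alpha)$. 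Hence $\sum_{i=2}^r(\alpha_i,\alpha)=(\alpha,\alpha)-(\alpha_1,\alpha)>0$, so $(\alpha_i,\alpha)>0$ for some $i$ with $2\le i\le r$. Since both the standing hypothesis and the desired conclusion are symmetric in $\alpha_2,\ldots,\alpha_r$, I may relabel these so that $(\alpha_2,\alpha)>0$; note this also forces $\alpha\ne-\alpha_2$.

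Now I would distinguish two cases. If $\alpha=\alpha_2$, then $\alpha_1+\sum_{i=3}^r\alpha_i=\alpha-\alpha_2=0$ (the sum is nonempty since $r\ge 3$), and taking the inner product with $\alpha_1$ gives $\sum_{i=3}^r(\alpha_1,\alpha_i)=-(\alpha_1,\alpha_1)<0$; hence $(\alpha_1,\alpha_j)<0$ for some $j\in\{3,\ldots,r\}$, so $\alpha_1+\alpha_j\in\Phi\cup\{0\}$, a contradiction. If $\alpha\ne\alpha_2$, then from $(\alpha_2,\alpha)>0$ we get $\alpha-\alpha_2\in\Phi\cup\{0\}$, and it is nonzero, so $\alpha-\alpha_2\in\Phi$. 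Now $\alpha_1+\sum_{i=3}^r\alpha_i=\alpha-\alpha_2$ expresses the root $\alpha-\alpha_2$ as a sum of $r-1$ roots with $\alpha_1$ distinguished, so the induction hypothesis gives either $\alpha_1=\alpha-\alpha_2$, whence $\alpha_1+\alpha_2=\alpha\in\Phi$, a contradiction, or $\alpha_1+\alpha_i\in\Phi\cup\{0\}$ for some $i\in\{3,\ldots,r\}$, again a contradiction. This completes the induction.

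The step I expect to be the main obstacle is the strict inequality $(\alpha,\alpha)>(\alpha_1,\alpha)$, equivalently the existence of an index $i\ge 2$ with $(\alpha_i,\alpha)>0$: this is exactly what makes the induction go, since it furnishes a summand from which $\alpha$ can be ``peeled off'', leaving a shorter expression of a root as a sum of roots. I would get it cheaply from positive-definiteness of $(\cdot,\cdot)$ applied to $\alpha-\alpha_1\ne 0$ together with $(\alpha_1,\alpha)\ge(\alpha_1,\alpha_1)$; alternatively one can run a Cartan-integer argument, using that $2(\alpha_1,\alpha)/(\alpha_1,\alpha_1)$ and $2(\alpha_1,\alpha)/(\alpha,\alpha)$ are positive integers whose product equals $4\cos^2\theta<4$, which forces the latter to be $1$. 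The remainder is routine bookkeeping with degenerate configurations: that $\alpha\ne-\alpha_1$ and $\alpha\ne-\alpha_2$ (forced by the relevant inner products being positive), and that the tail $\sum_{i=3}^r\alpha_i$ is nonempty precisely because $r\ge 3$.
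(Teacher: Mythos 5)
Your proof is correct. Note that the paper does not prove this statement at all: it is quoted verbatim from Athanasiadis \cite[Lemma 2.1(ii)]{athanasiadis05refinement}, so there is no in-paper argument to compare against; your write-up supplies a self-contained proof where the paper relies on a citation. Your induction on $r$, peeling off a summand $\alpha_i$ with $(\alpha_i,\alpha)>0$ and using the standard facts that $(\beta,\gamma)<0$ implies $\beta+\gamma\in\Phi\cup\{0\}$ and $(\beta,\gamma)>0$ implies $\beta-\gamma\in\Phi\cup\{0\}$, is the standard route to this kind of statement and is essentially the same style of argument as in the cited source; all the delicate points (the strict inequality $(\alpha,\alpha)>(\alpha_1,\alpha)$ via positive-definiteness together with $(\alpha_1,\alpha)\ge(\alpha_1,\alpha_1)$, the degenerate cases $\alpha=\alpha_2$ and $\alpha=-\alpha_2$, and the nonemptiness of the tail for $r\ge3$) are handled correctly. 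One tiny caveat: in your closing side remark, the Cartan-integer product bound $4\cos^2\theta<4$ alone only forces \emph{one} of the two integers to equal $1$; to conclude that it is $2(\alpha_1,\alpha)/(\alpha,\alpha)$ you still need the inequality $(\alpha_1,\alpha)\ge(\alpha_1,\alpha_1)$ (which forces the other factor to be at least $2$), so that alternative is not quite as self-contained as stated — but your main argument does not use it, so this does not affect correctness.
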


\begin{lemma}[\protect{\cite[Lemma 3.2]{athanasiadis06cluster}}]\label{inI}
 For $\alpha\in\Phi^+$ and $r_{\alpha}(\I)=r\leq k$, we have that $\alpha\in I_{r}$.
\end{lemma}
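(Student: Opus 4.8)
The plan is to induct on the number of parts in an optimal decomposition of $\alpha$, using only clause (1) of the definition of a geometric chain of ideals together with Lemma \ref{2.1}. First I would record that, since $r_\alpha(\I) = r \leq k$, there exists at least one decomposition $\alpha = \alpha_1 + \alpha_2 + \ldots + \alpha_m$ with $\alpha_i \in I_{r_i}$ for each $i$ and $r_1 + r_2 + \ldots + r_m = r$; call such a decomposition \emph{optimal}, and note that $r_i \geq 1$ for all $i$ because $I_0 = \varnothing$. Among all optimal decompositions I would choose one with the number of parts $m$ as small as possible. If $m = 1$, then $\alpha = \alpha_1 \in I_{r_1} = I_r$ and we are done, so the argument reduces to showing that $m \geq 2$ is impossible.

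For the inductive step, suppose $m \geq 2$. Each $\alpha_i$ lies in $I_{r_i} \subseteq \Phi^+$, so I would apply Lemma \ref{2.1} to the sum $\alpha = \alpha_1 + \alpha_2 + \ldots + \alpha_m \in \Phi^+$. This yields either $\alpha_1 = \alpha$, which is impossible since it forces the sum $\alpha_2 + \ldots + \alpha_m$ of at least one positive root to vanish, or an index $i \in \{2, \ldots, m\}$ with $\alpha_1 + \alpha_i \in \Phi \cup \{0\}$; being a sum of two positive roots, $\alpha_1 + \alpha_i$ is in fact a positive root. Since $r_1 + r_i \leq r_1 + \ldots + r_m = r \leq k$, clause (1) of the geometric condition gives $\alpha_1 + \alpha_i \in (I_{r_1} + I_{r_i}) \cap \Phi^+ \subseteq I_{r_1 + r_i}$. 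Merging the parts $\alpha_1$ and $\alpha_i$ into the single part $\alpha_1 + \alpha_i$ of rank $r_1 + r_i$, and keeping the remaining parts unchanged, produces another optimal decomposition — the ranks still sum to $r$ — with $m - 1$ parts, contradicting minimality. Hence $m = 1$ and $\alpha \in I_r$.

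I expect the only delicate point to be the bookkeeping in the merging step: one must simultaneously check that the two parts produced by Lemma \ref{2.1} have ranks summing to at most $k$, so that clause (1) is applicable, and that the merge keeps the total rank equal to $r$ while strictly decreasing the part count, which is what drives the induction down to $m = 1$. Excluding the degenerate alternative $\alpha_1 = \alpha$ of Lemma \ref{2.1} is immediate from positivity of roots. Note that clause (2) of the geometric condition, and the chain of order filters $\J$, play no role in this lemma.
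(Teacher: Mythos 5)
Your argument is correct: choosing a decomposition realizing $r_{\alpha}(\I)=r$ with the fewest parts, excluding the degenerate alternative of Lemma \ref{2.1} by positivity, and merging two summands via clause (1) of the geometric condition (applicable since $r_1+r_i\leq r\leq k$) strictly decreases the number of parts while preserving the total rank, so the minimal decomposition has one part and $\alpha\in I_r$. Note that the paper itself gives no proof of this lemma, citing it from Athanasiadis and Tzanaki; your descent argument is the natural one and matches the proof in that source, so there is nothing to flag beyond the cosmetic point that this is a minimal-counterexample argument rather than an induction in the strict sense.
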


\begin{lemma}[\protect{\cite[Lemma 3.10]{athanasiadis06cluster}}]\label{rind}
 Suppose $\alpha$ is an indecomposable element of $\I$. Then
 \begin{enumerate}
  \item $r_{\alpha}(\I)=r_{\beta}(\I)+r_{\gamma}(\I)-1$ if $\alpha=\beta+\gamma$ for $\beta,\gamma\in\Phi^+$ and
  \item $r_{\alpha}(\I)+r_{\beta}(\I)=r_{\alpha+\beta}(\I)$ if $\beta,\alpha+\beta\in\Phi^+$.
 \end{enumerate}

\end{lemma}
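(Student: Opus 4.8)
The plan is to prove the two assertions by manipulating decompositions, using Lemma~\ref{2.1} to merge parts, Lemma~\ref{inI} to control ranks, and both halves of the geometric condition $(I_i+I_j)\cap\Phi^+\subseteq I_{i+j}$ and $(J_i+J_j)\cap\Phi^+\subseteq J_{i+j}$. Throughout write $r:=r_\alpha(\I)$; since $\alpha$ is indecomposable we have $r\le k$, and Lemma~\ref{inI} gives $\alpha\in I_r$. For $\gamma\in\Phi^+$ put $j_\gamma:=\min\{j\le k:\gamma\in I_j\}$, with $j_\gamma:=\infty$ if $\gamma\notin I_k$; combining Lemma~\ref{inI} with the trivial one-term decomposition shows that $r_\gamma(\I)=j_\gamma$ whenever either side is at most $k$. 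Also, concatenating optimal decompositions gives $r_{\gamma+\delta}(\I)\le r_\gamma(\I)+r_\delta(\I)$ for all $\gamma,\delta,\gamma+\delta\in\Phi^+$. I would prove (1) by a direct manipulation of decompositions, and (2) by a case analysis on the value of $r_{\alpha+\beta}(\I)$; the only genuinely hard case is the one where $r_{\alpha+\beta}(\I)$ is finite but exceeds $k$.

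For part (1), write $r_\beta:=r_\beta(\I)$ and $r_\gamma:=r_\gamma(\I)$. Since $\beta,\gamma\le\alpha\in I_r$ and $I_r$ is an ideal, $\beta,\gamma\in I_r$, so $r_\beta$ and $r_\gamma$ are finite and at most $r$. For the lower bound $r\ge r_\beta+r_\gamma-1$: we have $\beta\in J_{j_\beta-1}$ and $\gamma\in J_{j_\gamma-1}$, so the order-filter condition gives $\alpha\in J_{j_\beta+j_\gamma-2}$, that is $\alpha\notin I_{j_\beta+j_\gamma-2}$, whence $r=j_\alpha\ge j_\beta+j_\gamma-1=r_\beta+r_\gamma-1$. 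For the upper bound $r\le r_\beta+r_\gamma-1$: suppose instead $r_\beta+r_\gamma\le r$; then concatenating optimal decompositions of $\beta$ and $\gamma$ produces a decomposition of $\alpha$ of total rank $r_\beta+r_\gamma\le r$, hence of rank exactly $r$, into at least two positive-root parts. Using Lemma~\ref{2.1} I would repeatedly merge two of the parts into a positive root, which by the ideal condition lies in the correct ideal — legitimate because the rank-sums that occur never exceed $r\le k$ — thereby decreasing the number of parts by one while keeping the total rank equal to $r$. Reaching exactly two parts exhibits $\alpha\in I_i+I_j$ with $i+j=r$ and $i,j\ge 1$, contradicting condition (2) in the definition of an indecomposable element. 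The two bounds together give (1).

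For part (2) it suffices, by the subadditivity noted above, to prove $r_{\alpha+\beta}(\I)\ge r+r_\beta(\I)$. Put $t:=r_{\alpha+\beta}(\I)$. If $t=\infty$, then since $\alpha\in I_k$ a finite value of $r_\beta(\I)$ would place $\beta$, and hence $\alpha+\beta$, in the $\mathbb{N}$-span of $I_k$, contradicting $t=\infty$; so $r_\beta(\I)=\infty$ and both sides are infinite. If $t\le k$, then condition (3) in the definition of an indecomposable element applies directly and gives $\beta\in I_{t-r}$, so $r_\beta(\I)\le t-r$ and hence $r+r_\beta(\I)\le t$. The remaining case $k<t<\infty$ is the main obstacle: condition (3) is now vacuous, and an optimal decomposition $\alpha+\beta=\delta_1+\cdots+\delta_m$ cannot in general be merged down to a single generator, since the rank-sums that arise can exceed $k$. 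I would attack this case by first replacing $\I$ with its truncation $(I_1,\ldots,I_r)$ — which one checks is again a geometric chain of ideals, in which $\alpha$ remains indecomposable and is in fact a maximal element of its top ideal — and then running an induction (on $k$, or on $\mathrm{ht}(\beta)$ by peeling off a simple root along a saturated chain in the root poset), using part (1) and both halves of the geometric condition to control the bookkeeping, along the lines of \cite[Lemma~3.10]{athanasiadis06cluster}. I expect this finite-but-$>k$ subcase to be the only delicate point; everything else is a short manipulation of decompositions using Lemmas~\ref{2.1} and~\ref{inI} and the geometric conditions.
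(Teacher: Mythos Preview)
The paper does not prove this lemma at all: it is quoted verbatim from \cite[Lemma~3.10]{athanasiadis06cluster} and used as a black box, so there is no ``paper's own proof'' to compare against.

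Your argument for part~(1) is correct and clean; the lower bound via the order-filter half of the geometric condition and the upper bound via Lemma~\ref{2.1}--merging both go through exactly as you describe. Likewise, for part~(2) the cases $t=\infty$ and $t\le k$ are handled correctly.

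The genuine gap is the case $k<t<\infty$ of part~(2), which you yourself flag as ``the main obstacle'' and then do not actually prove. The proposed strategy---truncate to $\I'=(I_1,\ldots,I_r)$ and induct on $k$ or on $\mathrm{ht}(\beta)$---is not obviously sound as stated. Truncation does preserve geometricity and the indecomposability of $\alpha$, but it does not reduce the difficulty: in $\I'$ one still has $r_{\alpha+\beta}(\I')\ge r_{\alpha+\beta}(\I)>k\ge r$, so you are again in the ``finite but larger than the top index'' regime, and induction on the length of the chain gains nothing. Nor is the height induction spelled out; the natural attempt (apply Lemma~\ref{2.1} to $-\alpha+\gamma_1+\cdots+\gamma_m=\beta$ for an optimal decomposition of $\alpha+\beta$) disposes of the cases $\gamma_i=\alpha$ and $\gamma_i-\alpha\in\Phi^+$ easily via condition~(3), but the case $\alpha-\gamma_i\in\Phi^+$ leaves you with $\alpha'=\alpha-\gamma_i$, which need not be indecomposable, and $\alpha'+\beta$, which need not be a root. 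You would need to say concretely how to close that subcase; invoking ``along the lines of \cite[Lemma~3.10]{athanasiadis06cluster}'' is a citation, not an argument.
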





\begin{lemma}\label{ideal}
 If $\alpha,\beta,\gamma\in\Phi^+$, $\beta+\gamma\in\Phi^+$ and $\alpha\leq\beta+\gamma$, then $\alpha\leq\beta$ or $\alpha\leq\gamma$ or $\alpha=\beta'+\gamma'$ with $\beta',\gamma'\in\Phi^+$, $\beta'\leq\beta$ and $\gamma'\leq\gamma$.
 \begin{proof}
  Let $\alpha=\beta+\gamma-\sum_{j\in J}\alpha_j$ with $\alpha_j\in S$ for all $j\in J$. We proceed by induction on $|J|$.
  If $|J|=0$, we are done. If $|J|=1$, we have that $\alpha=-\alpha_i+\beta+\gamma$ for some $\alpha_i\in S$.
  Thus by Lemma \ref{2.1}, we have either $\alpha=-\alpha_i$ (a contradiction), or $\beta'=\beta-\alpha_i\in\Phi\cup\{0\}$ or $\gamma'=\gamma-\alpha_i\in\Phi\cup\{0\}$.
  Notice that if $\beta'\neq0$, then $\beta'\in\Phi^+$, and similarly for $\gamma'$. So if $\beta'\in\Phi^+$ we may write $\alpha=\beta'+\gamma$ and otherwise we have $\gamma'\in\Phi^+$ and thus $\alpha=\beta+\gamma'$ as required.\\
  \\
  If $|J|>1$, we have $\alpha+\sum_{j\in J}\alpha_j=\beta+\gamma$, so by Lemma \ref{2.1}, either $\alpha=\beta+\gamma$, so we are done, or $\alpha+\alpha_j\in\Phi\cup\{0\}$ for some $j\in J$. In the latter case we even have $\alpha+\alpha_j\in\Phi^+$. By induction hypothesis, $\alpha+\alpha_j\leq\beta$ or $\alpha+\alpha_j\leq\gamma$ or $\alpha+\alpha_j=\beta'+\gamma'$ with $\beta',\gamma'\in\Phi^+$, $\beta'\leq\beta$ and $\gamma'\leq\gamma$. In the first two cases, we are done.
  In the latter case, we have $\alpha=-\alpha_j+\beta'+\gamma'$, so we proceed as in the $|J|=1$ case.
 \end{proof}

\end{lemma}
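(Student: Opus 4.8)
The plan is to induct on the number of simple roots one must subtract from $\beta+\gamma$ to reach $\alpha$. Concretely, write $\alpha=\beta+\gamma-\sum_{j\in J}\alpha_j$ with each $\alpha_j\in S$ (repetitions allowed, so $J$ is just an index set) and induct on $|J|$. The base case $|J|=0$ is immediate: take $\beta'=\beta$ and $\gamma'=\gamma$.

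The engine throughout is Lemma~\ref{2.1}. I would first settle the case $|J|=1$ directly: here $\alpha=-\alpha_i+\beta+\gamma\in\Phi$, so Lemma~\ref{2.1} forces $\alpha=-\alpha_i$ (impossible since $\alpha\in\Phi^+$), or $\beta-\alpha_i\in\Phi\cup\{0\}$, or $\gamma-\alpha_i\in\Phi\cup\{0\}$. A quick check shows that a difference like $\beta-\alpha_i$ lying in $\Phi\cup\{0\}$ must be $0$ or a positive root $\leq\beta$ — it cannot be a negative root, for then $\alpha_i$ would be a sum of two positive roots. In the nonzero subcases this writes $\alpha$ as $\beta'+\gamma$ or $\beta+\gamma'$ with the new summand a positive root bounded above by the old one; in the zero subcases $\alpha$ equals $\gamma$ or $\beta$, giving $\alpha\leq\gamma$ or $\alpha\leq\beta$.

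For $|J|>1$ I would apply Lemma~\ref{2.1} to $\alpha+\sum_{j\in J}\alpha_j=\beta+\gamma$: either $\alpha=\beta+\gamma$ and we are done, or $\alpha+\alpha_j\in\Phi^+$ for some $j\in J$ (it lies in $\Phi\cup\{0\}$, and $0$ is excluded on sign grounds). Since $\alpha+\alpha_j=\beta+\gamma-\sum_{j'\in J\setminus\{j\}}\alpha_{j'}$ uses one fewer simple root, the induction hypothesis applies to $\alpha+\alpha_j$ and gives $\alpha+\alpha_j\leq\beta$, or $\alpha+\alpha_j\leq\gamma$, or $\alpha+\alpha_j=\beta'+\gamma'$ with $\beta'\leq\beta$ and $\gamma'\leq\gamma$ positive roots. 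The first two options close immediately (they yield $\alpha\leq\beta$, resp.\ $\alpha\leq\gamma$), while the third leaves $\alpha=\beta'+\gamma'-\alpha_j$, which is exactly the $|J|=1$ situation with $(\beta',\gamma')$ in place of $(\beta,\gamma)$; running that argument and composing the resulting inequalities with $\beta'\leq\beta$ and $\gamma'\leq\gamma$ finishes the proof.

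I do not expect a real obstacle here: the whole argument is a clean double application of Lemma~\ref{2.1}. The one point I would write out most carefully — and the only place anything could go wrong — is the recurring sign bookkeeping: each time Lemma~\ref{2.1} produces a difference ``root minus simple root'' lying in $\Phi\cup\{0\}$, one must confirm it is $0$ or a \emph{positive} root that is still $\leq$ the root it was obtained from, so that the partial-order conclusions propagate correctly through the induction.
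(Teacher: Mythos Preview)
Your proposal is correct and follows essentially the same route as the paper: induction on $|J|$, with Lemma~\ref{2.1} applied once in the $|J|=1$ case and once to peel off a simple root in the $|J|>1$ case, then recycling the $|J|=1$ argument on the resulting $\alpha=\beta'+\gamma'-\alpha_j$. Your write-up is in fact slightly more careful than the paper's about the sign bookkeeping (why $\beta-\alpha_i$ cannot be a negative root) and about composing the inequalities $\beta''\leq\beta'\leq\beta$ at the end.
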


We are now ready to define the bounded dominant region $\underline{R}$ of the $(k+1)$-Catalan arrangement in terms of the corresponding geometric chain of $k+1$ ideals $\underline{\I}$. For a geometric chain of ideals $\I=(I_1,I_2,\ldots,I_k)$, let $\underline{I}_i=I_i$ for all $i\in [k]$ and let $\underline{I}_{k+1}=\bigcup_{i+j=k+1}((I_i+I_j)\cap\Phi^+)\cup I_k\cup S$. By Lemma \ref{ideal}, $\underline{I}_{k+1}$ is an ideal. Define $\underline{\I}=(\underline{I}_1,\ldots,\underline{I}_{k+1})$.
\begin{lemma}\label{Ibar}
 If $\I=(I_1,I_2,\ldots,I_k)$ is a geometric chain of $k$ ideals in the root poset of $\Phi$, then $\underline{\I}$ is a positive geometric chain of $k+1$ ideals. The bounded dominant region $\underline{R}=\theta^{-1}(\underline{\I})$ of the $(k+1)$-Catalan arrangement of $\Phi$ is contained in the region $R=\theta^{-1}(\I)$ of the $k$-Catalan arrangement.
 \begin{proof}
  By construction, $\underline{\I}$ is an ascending chain of ideals. If $i+j\leq k$, we have that $(\underline{I}_i+\underline{I}_j)\cap\Phi^+=(I_i+I_j)\cap\Phi^+\subseteq I_{i+j}=\underline{I}_{i+j}$ as $\I$ is geometric.
  If $i+j=k+1$ with $i,j\neq0$ (otherwise the result is trivial) we have that $(\underline{I}_i+\underline{I}_j)\cap\Phi^+=(I_i+I_j)\cap\Phi^+\subseteq\bigcup_{i+j=k+1}((I_i+I_j)\cap\Phi^+)\subseteq\underline{I}_{i+j}$.\\
  \\
  Let $\J=(J_1,J_2,\ldots,J_k)$ be the geometric chain of order filters corresponding to the geometric chain of ideals $\I$.
  Define $\underline{\J}$ similarly. 
  We need to verify that $(\underline{J}_i+\underline{J}_j)\cap\Phi^+\subseteq \underline{J}_{i+j}$ for all $i,j\in[k+1]$.\\
  \\
  Suppose first that $i+j\leq k$. Then $(\underline{J}_i+\underline{J}_j)\cap\Phi^+=(J_i+J_j)\cap\Phi^+\subseteq J_{i+j}=\underline{J}_{i+j}$ since $\J$ is geometric.\\
  \\
  Suppose next that $i+j=k+1$. Take any region $R'$ of the $(k+1)$-Catalan arrangement that is contained in $R$. Let $\theta(R')=\I'=(I_1',I_2',\ldots,I_{k+1}')$ be the geometric chain of ideals corresponding to $R'$ and let $\J'=(J_1',J_2',\ldots,J_{k+1}')$ be the corresponding geometric chain of order filters.
  Then $R$ and $R'$ are on the same side of each hyperplane of the $k$-Catalan arrangement. Thus $I_l'=I_l$ and $J_l'=J_l$ for $l\in[k]$. 
  Thus we have $\underline{I}_{k+1}=\bigcup_{i+j=k+1}((I_i+I_j)\cap\Phi^+)\cup I_k\cup S=\bigcup_{i+j=k+1}((I'_i+I'_j)\cap\Phi^+)\cup I'_k\cup S\subseteq I_{k+1}'\cup S$ since $\I'$ is geometric.
  Since $\J'$ is geometric, we have $(\underline{J}_i+\underline{J}_j)\cap\Phi^+=(J_i'+J_j')\cap\Phi^+\subseteq J_{i+j}'=J_{k+1}'$.
  The sum of two positive roots is never a simple root, so we even have $(\underline{J}_i+\underline{J}_j)\cap\Phi^+\subseteq J_{k+1}'\backslash{S}$. But $J_{k+1}'\backslash{S}\subseteq \underline{J}_{k+1}$, as $\underline{I}_{k+1}\subseteq I'_{k+1}\cup S$.
  Thus $(\underline{J}_i+\underline{J}_j)\cap\Phi^+\subseteq\underline{J}_{i+j}$.\\
  \\
  Lastly, in the case where $i+j>k+1$, we have $\underline{J}_j\subseteq\underline{J}_{k+1-i}$, so that $(\underline{J}_i+\underline{J}_j)\cap\Phi^+\subseteq(\underline{J}_i+\underline{J}_{k+1-i})\cap\Phi^+\subseteq\underline{J}_{k+1}=\underline{J}_{i+j}$.\\
  \\
  Thus the chain of ideals $\underline{\I}$ is geometric. It is also clearly positive, so $\underline{R}=\theta^{-1}(\underline{\I})$ is bounded. Since $\underline{I}_i=I_i$ for $i\in[k]$, $\underline{R}$ and $R$ are on the same side of each hyperplane of the $k$-Catalan arrangement, so $\underline{R}$ is contained in $R$.
 \end{proof}

\end{lemma}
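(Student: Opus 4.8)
The plan is to verify directly that $\underline{\I}$ meets every clause in the definition of a positive geometric chain of $k+1$ ideals, splitting the two geometricity conditions according to the size of $i+j$, and then to deduce the containment $\underline{R}\subseteq R$ from the fact that $\underline{\I}$ and $\I$ coincide in ranks $1,\dots,k$.

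First I would dispatch the structural points. The chain $\underline{I}_1\subseteq\cdots\subseteq\underline{I}_k\subseteq\underline{I}_{k+1}$ is ascending since $I_k\subseteq\underline{I}_{k+1}$ by construction; each $\underline{I}_i$ with $i\le k$ is an ideal by hypothesis, and $\underline{I}_{k+1}$ is an ideal by Lemma~\ref{ideal} (applied to each summand $I_i+I_j$, using that simple roots are minimal in the root poset so that the extra pieces $I_k\cup S$ cause no trouble); and $S\subseteq\underline{I}_{k+1}$, which is exactly positivity. For the ideal condition $(\underline{I}_i+\underline{I}_j)\cap\Phi^+\subseteq\underline{I}_{i+j}$ with $i+j\le k+1$: when $i+j\le k$ every term equals the corresponding term for $\I$, so this is just that $\I$ is geometric; when $i+j=k+1$ and $i,j\ge 1$ the set $(\underline{I}_i+\underline{I}_j)\cap\Phi^+=(I_i+I_j)\cap\Phi^+$ is one of the pieces in the union defining $\underline{I}_{k+1}$, and the cases $i=0$ or $j=0$ are trivial.

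The heart of the proof is the order-filter condition $(\underline{J}_i+\underline{J}_j)\cap\Phi^+\subseteq\underline{J}_{i+j}$, where $\underline{J}_i=\Phi^+\setminus\underline{I}_i$. For $i+j\le k$ this again reduces to $\J$ being geometric, and for $i+j>k+1$ it reduces to the case $i+j=k+1$ since the filters descend and $\underline{J}_{k+1}$ is an order filter. The case $i+j=k+1$ is where I expect the real work. Here I would choose any region $R'$ of the $(k+1)$-Catalan arrangement with $R'\subseteq R$ — one exists because the $(k+1)$-arrangement refines the $k$-arrangement and $R$ is open — set $\I'=\theta(R')$ and let $\J'$ be the corresponding chain of order filters. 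Since $R$ and $R'$ lie on the same side of every hyperplane of the $k$-Catalan arrangement, $I'_l=I_l$ and $J'_l=J_l$ for $l\in[k]$. Then, because $\I'$ is geometric, $\underline{I}_{k+1}=\bigcup_{i+j=k+1}((I'_i+I'_j)\cap\Phi^+)\cup I'_k\cup S\subseteq I'_{k+1}\cup S$, so $\underline{J}_l=J'_l$ for $l\le k$ and, using that $\J'$ is geometric, $(\underline{J}_i+\underline{J}_j)\cap\Phi^+=(J'_i+J'_j)\cap\Phi^+\subseteq J'_{k+1}$. A sum of two positive roots is never simple, so this set avoids $S$, hence lies in $J'_{k+1}\setminus S\subseteq\Phi^+\setminus\underline{I}_{k+1}=\underline{J}_{k+1}$, the last inclusion coming from $\underline{I}_{k+1}\subseteq I'_{k+1}\cup S$. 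The obstacle, then, is exactly this bookkeeping: one must translate the genuinely new rank-$(k+1)$ condition for $\underline{\I}$ into a statement about the rank-$(k+1)$ data of an actual $(k+1)$-Catalan chain $\I'$, over which we have no control other than that it refines $\I$.

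Finally, positivity of $\underline{\I}$ makes $\underline{R}=\theta^{-1}(\underline{\I})$ a bounded dominant region of the $(k+1)$-Catalan arrangement, and since $\underline{I}_i=I_i$ for every $i\in[k]$ the region $\underline{R}$ sits on the same side of each hyperplane of the $k$-Catalan arrangement as $R$, giving $\underline{R}\subseteq R$.
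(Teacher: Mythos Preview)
Your proposal is correct and follows essentially the same route as the paper's proof: you handle the ideal condition by cases on $i+j$, and for the crucial order-filter condition at level $k+1$ you introduce an auxiliary region $R'$ of the $(k+1)$-Catalan arrangement contained in $R$, compare $\underline{\I}$ with $\I'=\theta(R')$, and use that sums of positive roots are never simple to pass from $J'_{k+1}$ to $\underline{J}_{k+1}$. The only differences are expository (you note explicitly why such an $R'$ exists, and you mention the reduction for $i+j>k+1$ before treating the $i+j=k+1$ case), not mathematical.
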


For a geometric chain of $k$ ideals $\I=(I_1,I_2,\ldots,I_k)$, define $\sup(\I)=I_k\cap S$.
In particular, $\sup(\I)=S$ if and only if $\I$ is positive.
\begin{lemma}\label{r=r3}
 If $\alpha\in\langle \sup(\I)\rangle_{\mathbb{N}}$, then $r_{\alpha}(\underline{\I})=r_{\alpha}(\I)$. In particular, if $r_{\alpha}(\underline{\I})\leq k$, then $r_{\alpha}(\underline{\I})=r_{\alpha}(\I)$.
 \begin{proof}
  First note that $\alpha\in\langle \sup(\I)\rangle_{\mathbb{N}}$ implies that $r_{\alpha}(\I)<\infty$.
  So may write $\alpha=\alpha_1+\alpha_2+\ldots+\alpha_m$ with $\alpha_i\in I_{r_i}$ for $i\in[m]$ and $r_1+r_2+\ldots+r_m=r_{\alpha}(\I)$.
  Since $\alpha_i\in I_{r_i}=\underline{I}_{r_i}$ this implies that $r_{\alpha}(\underline{\I})\leq r_{\alpha}(\I)$.\\
  \\
  We may write $\alpha=\alpha_1+\alpha_2+\ldots+\alpha_m$ with $\alpha_i\in \underline{I}_{r_i}$ for $i\in[m]$ and $r_1+r_2+\ldots+r_m=r_{\alpha}(\underline{\I})$.
  We wish to show that $r_{\alpha}(\I)\leq r_{\alpha}(\underline{\I})$.
  Thus we seek to write $\alpha=\alpha_1'+\alpha_2'+\ldots+\alpha_l'$ with $\alpha_i'\in I_{r_i'}$ for $i\in[l]$ and $r_1'+r_2'+\ldots+r_l'=r_{\alpha}(\underline{\I})$.
  If $r_p=k+1$ for some $p\in[m]$, then $\alpha_p\in\underline{I}_{k+1}=\bigcup_{i+j=k+1}((I_i+I_j)\cap\Phi^+)\cup I_k\cup S$.
  If $\alpha_p\in I_k=\underline{I}_k$, we get a contradiction with the minimality of $r_{\alpha}(\underline{I})$.
  If $\alpha_p\in S$, then since $\alpha_p\in\langle \sup(\I)\rangle_{\mathbb{N}}$, we have that $\alpha_p\in\sup(\I)\subseteq I_k$, again a contradiction.
  So $\alpha_p\in\bigcup_{i+j=k+1}((I_i+I_j)\cap\Phi^+)$.
  Thus write $\alpha_p=\beta_p+\beta_p'$, where $\beta_p\in I_i$ and $\beta_p'\in I_j$ for some $i,j$ with $i+j=k+1$.
  So in the sum $\alpha=\alpha_1+\alpha_2+\ldots+\alpha_m$ replace each $\alpha_p$ with $r_p=k+1$ with $\beta_p+\beta_p'$ to obtain (after renaming) $\alpha=\alpha_1'+\alpha_2'+\ldots+\alpha_l'$ with $\alpha_i'\in I_{r_i'}$ for $i\in[l]$ and $r_1'+r_2'+\ldots+r_l'=r_{\alpha}(\underline{\I})$, as required.\\
  \\
  If $r_{\alpha}(\underline{\I})=r\leq k$, then $\alpha\in I_r\subseteq I_k$ by Lemma \ref{inI}, so $\alpha\in\langle \sup(\I)\rangle_{\mathbb{N}}$ and thus $r_{\alpha}(\underline{\I})=r_{\alpha}(\I)$.  
 \end{proof}
\end{lemma}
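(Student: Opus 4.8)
The plan is to prove the two inequalities $r_{\alpha}(\underline{\I}) \leq r_{\alpha}(\I)$ and $r_{\alpha}(\I) \leq r_{\alpha}(\underline{\I})$ separately, under the hypothesis $\alpha \in \langle \sup(\I)\rangle_{\mathbb{N}}$, and then handle the ``in particular'' clause as an easy consequence. First I would observe that $\alpha \in \langle \sup(\I)\rangle_{\mathbb{N}} = \langle I_k \cap S\rangle_{\mathbb{N}}$ forces $r_{\alpha}(\I) < \infty$, since every simple root in $\sup(\I)$ lies in $I_k = I_1$ or at least in some $I_r$, so $\alpha$ can be written as a sum of elements of $I_k$. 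The easy direction is $r_{\alpha}(\underline{\I}) \leq r_{\alpha}(\I)$: take an optimal decomposition $\alpha = \alpha_1 + \cdots + \alpha_m$ realizing $r_{\alpha}(\I)$ with $\alpha_i \in I_{r_i}$; since $I_{r_i} = \underline{I}_{r_i}$ for $r_i \in [k]$ (and $r_i$ is never larger than $k$ here), the same decomposition is a valid witness for $r_{\alpha}(\underline{\I})$.

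The substantive direction is $r_{\alpha}(\I) \leq r_{\alpha}(\underline{\I})$. Here I would start from an optimal decomposition $\alpha = \alpha_1 + \cdots + \alpha_m$ with $\alpha_i \in \underline{I}_{r_i}$ realizing $r_{\alpha}(\underline{\I})$, and the goal is to rewrite it using only the ideals $I_1, \ldots, I_k$ without increasing the total rank. The only obstruction is a summand $\alpha_p$ with $r_p = k+1$, since $\underline{I}_{k+1}$ is strictly larger than any $I_i$. By the definition of $\underline{I}_{k+1} = \bigcup_{i+j=k+1}((I_i+I_j)\cap\Phi^+) \cup I_k \cup S$, I would split into three cases for such an $\alpha_p$: if $\alpha_p \in I_k$, then since $I_k = \underline{I}_k$ we could replace $r_p = k+1$ by $k$, contradicting minimality of $r_{\alpha}(\underline{\I})$; if $\alpha_p \in S$, then because $\alpha_p$ appears as a summand of $\alpha \in \langle \sup(\I)\rangle_{\mathbb{N}}$ we must have $\alpha_p \in \sup(\I) \subseteq I_k$, reducing to the previous contradiction (this is precisely the step where the hypothesis $\alpha \in \langle \sup(\I)\rangle_{\mathbb{N}}$ is used); so the only surviving case is $\alpha_p \in (I_i + I_j)\cap\Phi^+$ with $i + j = k+1$, whence $\alpha_p = \beta_p + \beta_p'$ with $\beta_p \in I_i$, $\beta_p' \in I_j$. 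Replacing each such $\alpha_p$ by the pair $\beta_p, \beta_p'$ gives a decomposition of $\alpha$ into elements of $I_1, \ldots, I_k$ whose ranks still sum to $r_{\alpha}(\underline{\I})$, so $r_{\alpha}(\I) \leq r_{\alpha}(\underline{\I})$.

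The main obstacle — though it is a mild one — is the case $\alpha_p \in S$: one needs the fact that a simple root appearing as a summand in a decomposition of an element of $\langle \sup(\I)\rangle_{\mathbb{N}}$ must itself lie in $\sup(\I)$. This is where the hypothesis genuinely bites, and without it the lemma fails (for non-positive $\I$ the extra simple roots added to $\underline{I}_{k+1}$ really can decrease ranks). I should be slightly careful that after splitting and renaming, I indeed get roots in $\Phi^+$ indexed by ranks in $[k]$ — but $\beta_p, \beta_p' \in \Phi^+$ is given and $i, j \leq k$ since $i + j = k+1$ with $i, j \geq 1$.

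Finally, for the ``in particular'' statement: if $r := r_{\alpha}(\underline{\I}) \leq k$, then by Lemma~\ref{inI} applied to the positive geometric chain $\underline{\I}$ we get $\alpha \in \underline{I}_r \subseteq \underline{I}_k = I_k$, so $\alpha \in \langle \sup(\I)\rangle_{\mathbb{N}} = \langle I_k \cap S\rangle_{\mathbb{N}}$ (every element of $I_k$ is a nonnegative integer combination of simple roots, all of which then lie in $I_k$ since $I_k$ is an ideal, hence in $\sup(\I)$), and the first part applies to give $r_{\alpha}(\underline{\I}) = r_{\alpha}(\I)$.
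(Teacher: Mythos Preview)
Your argument is correct and follows the paper's proof essentially step for step: the same easy inequality via $I_{r_i}=\underline{I}_{r_i}$, the same three-case analysis on summands $\alpha_p$ with $r_p=k+1$ (using minimality to rule out $\alpha_p\in I_k$, the hypothesis $\alpha\in\langle\sup(\I)\rangle_{\mathbb{N}}$ together with nonnegativity of simple-root coefficients to rule out $\alpha_p\in S\setminus\sup(\I)$, and splitting in the $(I_i+I_j)$ case), and the same appeal to Lemma~\ref{inI} for the ``in particular'' clause. The only blemish is the parenthetical ``$I_k=I_1$'', which is not true in general and should simply read ``$I_k$''; it does not affect the argument.
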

For $R$ a dominant region of the $k$-Catalan arrangement, define the \emph{pseudomaximal} alcove of $R$ to be the maximal alcove of $\underline{R}$.
This term is justified by the following proposition.
\begin{proposition}
 If $R$ is a bounded dominant region of the $k$-Catalan arrangement, its pseudomaximal alcove is equal to its maximal alcove.
 \begin{proof}
  Let $A$ and $B$ be the maximal and pseudomaximal alcoves of $R$ respectively. If $\I=\theta(R)$, then $r(\alpha,A)=r_{\alpha}(\I)$ for all $\alpha\in\Phi^+$.
  Since $B$ is the maximal alcove of $\underline{R}$, we have $r(\alpha,B)=r_{\alpha}(\underline{\I})$ for all $\alpha\in\Phi^+$.
  Now $\I$ is positive since $R$ is bounded, so $\sup(\I)=S$. Thus $r_{\alpha}(\I)=r_{\alpha}(\underline{\I})$ for all $\alpha\in\Phi^+$ by Lemma \ref{r=r3}.
  So $r(\alpha,A)=r(\alpha,B)$ for all $\alpha\in\Phi^+$ and therefore $A=B$.
 \end{proof}

\end{proposition}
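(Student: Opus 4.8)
The plan is to show that the maximal alcove $A$ of $R$ and its pseudomaximal alcove $B$ occupy the same position relative to every hyperplane of the affine Coxeter arrangement, which forces $A=B$ since an alcove is determined by this data.

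First I would record what $A$ and $B$ are explicitly. Let $\I=\theta(R)$. Since $R$ is bounded, $\I$ is a positive geometric chain of $k$ ideals, and by the characterisation of the maximal alcove of a bounded region recalled above, $r(A,\alpha)=r_{\alpha}(\I)$ for all $\alpha\in\Phi^+$. On the other hand, $B$ is by definition the maximal alcove of $\underline{R}=\theta^{-1}(\underline{\I})$; by Lemma \ref{Ibar} the chain $\underline{\I}$ is positive, so $\underline{R}$ is a bounded dominant region of the $(k+1)$-Catalan arrangement and hence $r(B,\alpha)=r_{\alpha}(\underline{\I})$ for all $\alpha\in\Phi^+$.

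Next I would invoke Lemma \ref{r=r3} to identify these two tuples of integers. Because $R$ is bounded, $\I$ is positive, so $\sup(\I)=I_k\cap S=S$ and therefore $\langle\sup(\I)\rangle_{\mathbb{N}}$ contains every positive root. Lemma \ref{r=r3} then yields $r_{\alpha}(\underline{\I})=r_{\alpha}(\I)$ for all $\alpha\in\Phi^+$, i.e. $r(B,\alpha)=r(A,\alpha)$ for all $\alpha\in\Phi^+$. Finally, an alcove $C$ is exactly the set of $x\in V$ with $r(C,\alpha)-1<\langle x,\alpha\rangle<r(C,\alpha)$ for all $\alpha\in\Phi^+$, so the tuple $(r(C,\alpha))_{\alpha\in\Phi^+}$ determines $C$; thus $A=B$. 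There is no genuine obstacle here — the one point worth spelling out is that positivity of $\I$ makes $\sup(\I)=S$, so that the hypothesis $\alpha\in\langle\sup(\I)\rangle_{\mathbb{N}}$ of Lemma \ref{r=r3} holds for every positive root and the lemma applies uniformly.
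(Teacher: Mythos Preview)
Your proposal is correct and follows essentially the same argument as the paper: identify $r(A,\alpha)=r_{\alpha}(\I)$ and $r(B,\alpha)=r_{\alpha}(\underline{\I})$, use positivity of $\I$ to get $\sup(\I)=S$, apply Lemma~\ref{r=r3} to conclude $r_{\alpha}(\I)=r_{\alpha}(\underline{\I})$ for all $\alpha\in\Phi^+$, and deduce $A=B$. The only difference is that you spell out a few justifications (invoking Lemma~\ref{Ibar} for positivity of $\underline{\I}$, and why the tuple $(r(C,\alpha))_{\alpha}$ determines the alcove) that the paper leaves implicit.
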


\begin{lemma}\label{max}
 Let $R$ be a region of the $k$-Catalan arrangement of $\Phi$, let be $B$ be its pseudomaximal alcove and let $t\leq k$ be a positive integer.
 If $\langle x_0,\alpha\rangle>t$ for some $x_0\in R$, then $\langle x,\alpha\rangle>t$ for all $x\in B$.
 \begin{proof}
  Let $\I=\theta(R)$. Since $r(B,\alpha)=r_{\alpha}(\underline{\I})$ for all $\alpha\in\Phi^+$, it suffices to show that $r_{\alpha}(\underline{\I})>t$.
  If $r_{\alpha}(\underline{\I})>k$ this is immediate, so we may assume that $r_{\alpha}(\underline{\I})\leq k$. Thus we have $r_{\alpha}(\underline{\I})=r_{\alpha}(\I)$ by Lemma \ref{r=r3}.
  Write $\alpha=\alpha_1+\alpha_2+\ldots+\alpha_m$, with $\alpha_i\in I_{r_i}$ for all $i\in[m]$ and $r_1+r_2+\ldots+r_m=r_{\alpha}(\I)$.
  Then $\langle x,\alpha_i\rangle<r_i$ for all $i\in[m]$ and $x\in R$, so $\langle x,\alpha\rangle<r_{\alpha}(\I)$ for all $x\in R$. So if $\langle x_0,\alpha\rangle>t$ for some $x_0\in R$, then $r_{\alpha}(\I)>\langle x_0,\alpha\rangle>t$, so $r_{\alpha}(\underline{\I})=r_{\alpha}(\I)>t$.
 \end{proof}

\end{lemma}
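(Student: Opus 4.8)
The plan is to translate the geometric statement into an inequality about $r_{\alpha}(\underline{\I})$, where $\I=\theta(R)$. By construction the pseudomaximal alcove $B$ of $R$ is the maximal alcove of the bounded dominant region $\underline{R}=\theta^{-1}(\underline{\I})$, and $\underline{\I}$ is a positive geometric chain of $k+1$ ideals by Lemma \ref{Ibar}; hence $r(B,\alpha)=r_{\alpha}(\underline{\I})$ for every $\alpha\in\Phi^+$. Since $r(B,\alpha)$ is the unique integer with $r(B,\alpha)-1<\langle x,\alpha\rangle<r(B,\alpha)$ for all $x\in B$, the condition $r_{\alpha}(\underline{\I})=r(B,\alpha)>t$ forces $\langle x,\alpha\rangle>r(B,\alpha)-1\geq t$ for all $x\in B$. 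So it suffices to prove $r_{\alpha}(\underline{\I})>t$.

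I would then split into two cases according to the size of $r_{\alpha}(\underline{\I})$. If $r_{\alpha}(\underline{\I})>k$ we are done immediately because $t\leq k$; note that here one must not attempt to compare with $r_{\alpha}(\I)$, which may be infinite, but nothing beyond $t\leq k$ is needed. If instead $r_{\alpha}(\underline{\I})\leq k$, then Lemma \ref{r=r3} gives $r_{\alpha}(\underline{\I})=r_{\alpha}(\I)<\infty$, so it remains to show $r_{\alpha}(\I)>t$. For this, unpack the definition: choose an optimal decomposition $\alpha=\alpha_1+\cdots+\alpha_m$ with $\alpha_i\in I_{r_i}$ and $r_1+\cdots+r_m=r_{\alpha}(\I)$. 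Because $I_{r_i}=\{\beta\in\Phi^+\mid\langle x,\beta\rangle<r_i\text{ for all }x\in R\}$ by the definition of $\theta(R)$, summing the strict inequalities $\langle x,\alpha_i\rangle<r_i$ yields $\langle x,\alpha\rangle<r_{\alpha}(\I)$ for every $x\in R$. Evaluating at $x=x_0$, where by hypothesis $\langle x_0,\alpha\rangle>t$, gives $r_{\alpha}(\I)>t$, hence $r_{\alpha}(\underline{\I})>t$, as required.

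I do not anticipate a genuine obstacle: the argument is essentially bookkeeping built on two facts, namely that the $\alpha$-coordinates on $R$ are bounded above by $r_{\alpha}(\I)$ (immediate from the definitions of $\theta(R)$ and of $r_{\alpha}(\I)$), and that passing from $\I$ to $\underline{\I}$ cannot push $r_{\alpha}$ below $k+1$ while leaving it $\leq k$ changed, which is exactly the content of Lemma \ref{r=r3}. The only point demanding a little care is keeping the case $r_{\alpha}(\underline{\I})>k$ separate, so as never to invoke a possibly infinite $r_{\alpha}(\I)$; once that dichotomy is in place the rest follows from the definitions.
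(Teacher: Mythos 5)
Your proof is correct and follows essentially the same route as the paper: reduce to showing $r_{\alpha}(\underline{\I})>t$ via $r(B,\alpha)=r_{\alpha}(\underline{\I})$, dispose of the case $r_{\alpha}(\underline{\I})>k$ using $t\leq k$, and otherwise use Lemma \ref{r=r3} together with an optimal decomposition of $\alpha$ into elements of the ideals $I_{r_i}$ to bound $\langle x,\alpha\rangle<r_{\alpha}(\I)$ on $R$ and evaluate at $x_0$. The only difference is that you spell out the reduction step and the caution about a possibly infinite $r_{\alpha}(\I)$, which the paper leaves implicit.
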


\begin{lemma}\label{ind}
 If $\alpha$ is a rank $r$ indecomposable element of $\I$, then $\alpha$ is a rank $r$ indecomposable element of $\underline{\I}$.
 \begin{proof}
  Let $\alpha$ be a rank $r$ indecomposable element of $\I$. Then $\alpha\in I_r=\underline{I}_r$, and $r_{\alpha}(\underline{\I})=r_{\alpha}(\I)=r$ by Lemma \ref{r=r3}.
  We have that $\alpha\notin I_i+I_j=\underline{I}_i+\underline{I}_j$ for $i+j=r$.
  If $r_{\alpha+\beta}(\underline{\I})=t\leq k+1$, then $\alpha+\beta\in \underline{I}_t$ by Lemma \ref{inI}.
  So if $t\leq k$, we have $r_{\alpha+\beta}(\I)=r_{\alpha+\beta}(\underline{\I})$ by Lemma \ref{r=r3}.
  If $t=k+1$, then $\alpha+\beta\in I_k$ or $\alpha+\beta\in\bigcup_{i+j=k+1}((I_i+I_j)\cap\Phi^+)$, since $\alpha+\beta\notin S$.
  Either way, $\alpha+\beta\in \langle I_k\rangle_{\mathbb{N}}$ so $r_{\alpha+\beta}(\I)=r_{\alpha+\beta}(\underline{\I})$ by Lemma \ref{r=r3}.
  Thus we have $r_{\alpha}(\I)+r_{\beta}(\I)=r_{\alpha+\beta}(\I)=r_{\alpha+\beta}(\underline{\I})=t$ using Lemma \ref{rind}.
  So $r_{\beta}(\I)=t-r_{\alpha}(\I)=t-r$, so $\beta\in I_{t-r}=\underline{I}_{t-r}$ by Lemma \ref{inI}.
  Thus $\alpha$ is a rank $r$ indecomposable element of $\underline{\I}$.
 \end{proof}

\end{lemma}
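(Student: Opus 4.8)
The plan is to verify, one at a time, the three conditions defining a rank $r$ indecomposable element of $\underline{\I}$, using throughout that $\underline{I}_i=I_i$ for every $i\in[k]$ and that $r\in[k]$ (since $\I$ is a chain of only $k$ ideals). The membership requirement $\alpha\in\underline{I}_r$ is then immediate from $\alpha\in I_r$. For condition~(1), I would observe that a minimal decomposition $\alpha=\alpha_1+\cdots+\alpha_m$ with $\alpha_i\in I_{r_i}$ and $r_1+\cdots+r_m=r_{\alpha}(\I)=r$ (which exists since $r<\infty$) is simultaneously a decomposition with $\alpha_i\in\underline{I}_{r_i}$, so $r_{\alpha}(\underline{\I})\le r\le k$; the ``in particular'' clause of Lemma~\ref{r=r3} then upgrades this to $r_{\alpha}(\underline{\I})=r_{\alpha}(\I)=r$. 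For condition~(2), if $i+j=r$ then $i,j\le r\le k$, hence $\underline{I}_i+\underline{I}_j=I_i+I_j$, and $\alpha\notin I_i+I_j$ holds by hypothesis.

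The substance of the lemma is condition~(3): given $\beta\in\Phi^+$ with $r_{\alpha+\beta}(\underline{\I})=t\le k+1$, I must show $\beta\in\underline{I}_{t-r}$. The idea is to transfer the computation of $r_{\alpha+\beta}(\underline{\I})$ back to $\I$ and then invoke the indecomposability of $\alpha$ there. If $t\le k$, Lemma~\ref{r=r3} gives $r_{\alpha+\beta}(\I)=t$ directly. If $t=k+1$, then Lemma~\ref{inI} places $\alpha+\beta$ in $\underline{I}_{k+1}=\bigcup_{i+j=k+1}((I_i+I_j)\cap\Phi^+)\cup I_k\cup S$; since $\alpha+\beta$ is a sum of two positive roots it is not simple, so it lies either in $I_k$ or in some $(I_i+I_j)\cap\Phi^+$, and in either case $\alpha+\beta\in\langle I_k\rangle_{\mathbb{N}}$, which lets me apply Lemma~\ref{r=r3} once more to obtain $r_{\alpha+\beta}(\I)=t$. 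In both cases $\beta,\alpha+\beta\in\Phi^+$ and $\alpha$ is indecomposable in $\I$, so Lemma~\ref{rind}(2) yields $r+r_{\beta}(\I)=r_{\alpha+\beta}(\I)=t$, whence $r_{\beta}(\I)=t-r$; since $1\le t-r\le k$, Lemma~\ref{inI} gives $\beta\in I_{t-r}=\underline{I}_{t-r}$, as required.

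The only genuinely delicate point is the $t=k+1$ subcase of condition~(3): there the value $r_{\alpha+\beta}(\underline{\I})$ is realized using the freshly adjoined top ideal $\underline{I}_{k+1}$, so nothing about $\I$ can be read off until one first argues that $\alpha+\beta$ actually lies in $\langle I_k\rangle_{\mathbb{N}}$ — and this is precisely where one needs the remark that a sum of two positive roots is never a simple root, ruling out the ``$\cup\,S$'' term. Everything else is bookkeeping with the identity $\underline{I}_i=I_i$ for $i\le k$ and repeated appeals to Lemmas~\ref{inI}, \ref{rind}, and~\ref{r=r3}.
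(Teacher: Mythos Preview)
Your proposal is correct and follows essentially the same route as the paper's proof: verify the three defining conditions using $\underline{I}_i=I_i$ for $i\le k$, and in condition~(3) split into the cases $t\le k$ and $t=k+1$, in the latter case using that $\alpha+\beta\notin S$ to place $\alpha+\beta$ in $\langle I_k\rangle_{\mathbb{N}}$ so that Lemma~\ref{r=r3} applies, then finish with Lemma~\ref{rind}(2) and Lemma~\ref{inI}. Your write-up is in fact slightly more careful than the paper's in noting explicitly that $1\le t-r\le k$ before invoking $I_{t-r}=\underline{I}_{t-r}$.
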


\begin{lemma}\label{ceil}
 If $\alpha\in\Phi^+$ and $H_{\alpha}^r$ is a ceiling of a dominant region $R$ of the $k$-Catalan arrangement, then $\alpha$ is a rank $r$ indecomposable element of $\I=\theta(R)$.
 \begin{proof}
  Since the origin and $R$ are on the same side of $H_{\alpha}^r$, we have that $\langle x,\alpha\rangle<r$ for all $x\in R$, so $\alpha\in I_r$ and thus $r_{\alpha}(\I)\leq r$. But if $r_{\alpha}(\I)=i<r$, then $\alpha\in I_i$ by Lemma \ref{inI}, so $\langle x,\alpha\rangle<i\leq r-1$ for all $x\in R$. So $H_{\alpha}^r$ is not a wall of $R$, a contradiction. Thus $r_{\alpha}(\I)=r$.\\
  \\
  If $\alpha=\beta+\gamma$ for $\beta\in I_i$ and $\gamma\in I_j$ with $i+j=r$, then the fact that $\langle x,\alpha\rangle<r$ for all $x\in R$ is a consequence of $\langle x,\beta\rangle<i$ and $\langle x,\gamma\rangle<j$ for all $x\in R$, so $H_{\alpha}^r$ does not support a facet of $R$. So $\alpha\notin I_i+I_j$ for $i+j=r$.\\
  \\
  If $r_{\alpha+\beta}(\I)=t\leq k$, then $\alpha+\beta\in I_t$ by Lemma \ref{inI}, so $\langle x,\alpha+\beta\rangle<t$ for all $x$ in $R$. If also $\langle x,\beta\rangle>t-r$ for all $x\in R$, then $\langle x,\alpha\rangle<r$ for all $x\in R$ is a consequence of these, so $H_{\alpha}^r$ does not support a facet of $R$.
  So $\langle x,\beta\rangle<t-r$ for all $x\in R$, so $\beta\in I_{t-r}$.\\
  \\
  Thus $\alpha$ is a rank $r$ indecomposable element of $\I$.
 \end{proof}

\end{lemma}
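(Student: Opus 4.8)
\emph{Proof strategy.} The plan is to verify directly the four conditions in the definition of $\alpha$ being a rank $r$ indecomposable element of $\I$, each of them via one geometric observation: if the closed half-space bounded by $H_\alpha^r$ that contains $\overline R$ is already cut out on $\overline R$ by the closed half-spaces bounded by two \emph{mutually non-parallel} hyperplanes of the $k$-Catalan arrangement that contain $\overline R$, then $\overline R\cap H_\alpha^r$ is contained in the intersection of those two hyperplanes, hence has dimension at most $n-2$, so $H_\alpha^r$ supports no facet of $R$ and is not a wall. Since $H_\alpha^r$ is a wall by hypothesis, no such redundancy can occur. I also use freely that $R$, being a region, lies strictly on one side of each $H_\gamma^s$ with $\gamma\in\Phi^+$ and $0\le s\le k$; thus for each such $\gamma,s$ exactly one of $\gamma\in I_s$ (that is, $\langle x,\gamma\rangle<s$ on $R$) and $\langle x,\gamma\rangle>s$ on $R$ holds.

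First I would record that $\alpha\in I_r$ and $r_\alpha(\I)=r$: since $H_\alpha^r$ is a ceiling, $r\ge1$ and $R$ lies on the same side of $H_\alpha^r$ as the origin, so $\langle x,\alpha\rangle<r$ on $R$, giving $\alpha\in I_r$ and $r_\alpha(\I)\le r$; and $r_\alpha(\I)=i<r$ would give $\alpha\in I_i$ by Lemma~\ref{inI}, hence $\langle x,\alpha\rangle<i\le r-1$ on $R$ and $\overline R\cap H_\alpha^r=\varnothing$, contradicting that $H_\alpha^r$ is a wall. This is condition~(1). For condition~(2), if $\alpha=\beta+\gamma$ with $\beta\in I_i$, $\gamma\in I_j$ and $i+j=r$, then on $\overline R$ the bounds $\langle x,\beta\rangle\le i$ and $\langle x,\gamma\rangle\le j$ already force $\langle x,\alpha\rangle\le r$, with equality only on $H_\beta^i\cap H_\gamma^j$; as $\beta,\gamma$ are distinct positive roots these hyperplanes are non-parallel, so $H_\alpha^r$ would not be a wall --- a contradiction. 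Hence $\alpha\notin I_i+I_j$ whenever $i+j=r$.

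For condition~(3), suppose $r_{\alpha+\beta}(\I)=t\le k$ for some $\beta\in\Phi^+$. By Lemma~\ref{inI}, $\alpha+\beta\in I_t$, so $\langle x,\alpha+\beta\rangle<t$ on $R$, and since $\langle x,\beta\rangle>0$ on the dominant region $R$ this gives $\langle x,\alpha\rangle<t$ on $R$. If $t\le r$, then on $\overline R$ the bounds $\langle x,\alpha+\beta\rangle\le t$ and $\langle x,\beta\rangle\ge0$ force $\langle x,\alpha\rangle\le t\le r$, with equality (to $r$) possible only when $t=r$ and $x\in H_{\alpha+\beta}^{r}\cap H_\beta^{0}$, a set of dimension at most $n-2$ since $\alpha+\beta$ and $\beta$ are non-parallel; so $H_\alpha^r$ would not be a wall, forcing $t\ge r+1$ and hence $1\le t-r\le k-1$. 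The dichotomy above now gives either $\beta\in I_{t-r}$ --- as required --- or $\langle x,\beta\rangle>t-r$ on $R$; in the latter case, on $\overline R$ the bounds $\langle x,\alpha+\beta\rangle\le t$ and $\langle x,\beta\rangle\ge t-r$ force $\langle x,\alpha\rangle\le r$ with equality only on the non-parallel intersection $H_{\alpha+\beta}^{t}\cap H_\beta^{t-r}$, contradicting once more that $H_\alpha^r$ is a wall. Thus $\beta\in I_{t-r}$, which is condition~(3), and $\alpha$ is a rank $r$ indecomposable element of $\I$. I expect the only genuine obstacle to be stating and justifying the wall/redundancy principle cleanly --- it is elementary for full-dimensional cells of a hyperplane arrangement but should be made explicit --- after which everything reduces to unwinding the definitions together with Lemma~\ref{inI}, plus the short case analysis $t\le r$ above.
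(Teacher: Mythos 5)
Your proposal is correct and follows essentially the same route as the paper's proof: verify the defining conditions of a rank $r$ indecomposable element directly from the inequalities defining $\I=\theta(R)$ together with Lemma~\ref{inI}, deriving a contradiction with $H_\alpha^r$ being a wall whenever the bound $\langle x,\alpha\rangle<r$ is implied by other inequalities of the arrangement. The only difference is that you make explicit what the paper leaves implicit --- the dimension-count justification that a redundant inequality cannot support a facet, and the short case analysis ruling out $t\le r$ before invoking the dichotomy for $H_\beta^{t-r}$ --- which is a welcome clarification rather than a new approach.
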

\begin{proof}[Proof of Theorem \ref{ind=ceil}]
We take $B$ to be the pseudomaximal alcove of $R$, that is the maximal alcove of $\underline{R}$.
 We will show that (1) $\Rightarrow$ (2) $\Rightarrow$ (3) $\Rightarrow$ (1).\\
 \\
 The statement that (1) $\Rightarrow$ (2) is Lemma \ref{ceil}.\\
 \\
 For (2) $\Rightarrow$ (3), suppose $\alpha$ is a rank $r$ indecomposable element of $\I$. Then by Lemma \ref{ind}, $\alpha$ is also a rank $r$ indecomposable element of $\underline{\I}$.
 So by Lemma \ref{rind}, we have $r_{\alpha}(\underline{\I})=r_{\beta}(\underline{\I})+r_{\gamma}(\underline{\I})-1$ if $\alpha=\beta+\gamma$ for $\beta,\gamma\in\Phi^+$, and also $r_{\alpha}(\underline{\I})+r_{\beta}(\underline{\I})=r_{\alpha+\beta}(\underline{\I})$ if $\beta,\alpha+\beta\in\Phi^+$.
 Thus there exists an alcove $B'$ with $r(B',\beta)=r_{\beta}(\underline{\I})$ for $\beta\neq\alpha$ and $r(B',\alpha)=r_{\alpha}(\underline{\I})+1$ by Lemma \ref{aff}.
 Since $r(B,\beta)=r_{\beta}(\underline{\I})$ for all $\beta\in\Phi^+$, this means that $B'$ and $B$ are on the same side of each hyperplane of the affine Coxeter arrangement, except for $H_{\alpha}^{r_{\alpha}(\I)}=H_{\alpha}^r$.
 Thus $H_{\alpha}^r$ is a wall of $B$. Since $H_{\alpha}^r$ does not separate $B$ from the origin, it is a ceiling of $B$. \\
 \\
 For (3) $\Rightarrow$ (1), suppose $H_{\alpha}^r$ is a ceiling of $B$. Let $B'$ be the alcove which is the reflection of $B$ in the hyperplane $H_{\alpha}^r$.
 Then $\langle x,\alpha\rangle>r$ for all $x\in B'$, so by Lemma \ref{max} the alcove $B'$ is not contained in $R$.
 Thus $H_{\alpha}^r$ is a wall of $R$. It does not separate $R$ from the origin, so it is a ceiling of $R$.
 This completes the proof.
\end{proof}


\section{Proof of Theorem \ref{bij}}
We are now in a position to prove Theorem \ref{bij}.
\begin{proof}[Proof of Theorem \ref{bij}]
    Let us at first suppose that $\Phi$ is an irreducible crystallographic root system of rank $n$.
    For $m=0$, the statement is immediate.
    Suppose that $0<m\leq n$.\\
    \\
    To define the bijection $\Theta$, let $R\in U(M)$ and let $A$ be the minimal alcove of $R$. 
     The reflections $s^{i_1}_{\alpha_1},\ldots,s^{i_m}_{\alpha_m}$ in the hyperplanes $H^{i_1}_{\alpha_1},\ldots,H^{i_m}_{\alpha_m}$ are reflections in facets of the alcove $A=w(A_{\circ})$, so the set $S'=\{s^{i_1}_{\alpha_1},\ldots,s^{i_m}_{\alpha_m}\}$ equals $wJw^{-1}$ for some $J\subset S_a$ and $w\in W_a$.
     Thus the reflection group $W'$ generated by $S'$ is a proper parabolic subgroup of $W_a$. In particular, it is finite. 
     With respect to the finite reflection group $W'$, the alcove $A$ is contained in the dominant Weyl chamber, that is the set $$C=\{x\in V\mid\langle x,\alpha_j\rangle>i_j\text{ for all }j\in[m]\}\text{.}$$ So if $w_0'$ is the longest element of $W'$ with respect to the generating set $S'$, the alcove $A'=w_0'(A)$ is contained in the Weyl chamber $$w_0'(C)=\{x\in V\mid\langle x,\alpha_j\rangle<i_j\text{ for all }j\in[m]\}$$ of $W'$, so it is on the other side of all the hyperplanes $H^{i_1}_{\alpha_1},\ldots,H^{i_m}_{\alpha_m}$.
     $A'$ is an alcove, so it is contained in some region $R'$. Set $\Theta(R)=R'$.
\begin{figure}[h]
\begin{center}
 \includegraphics{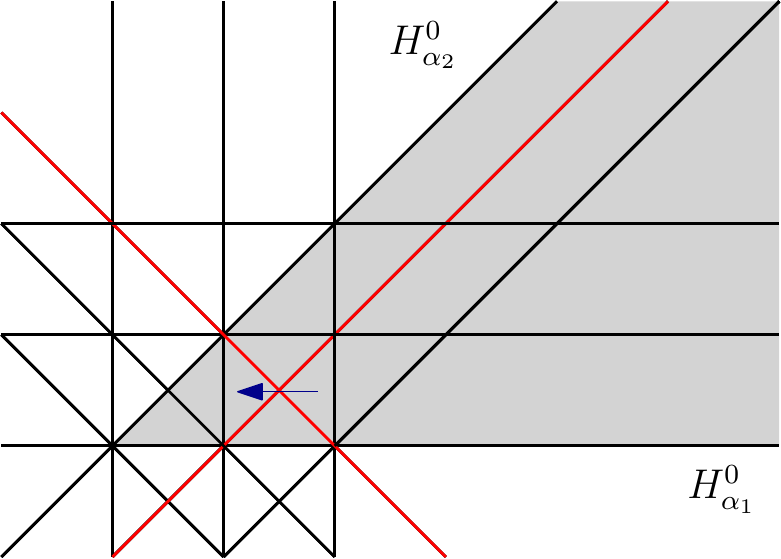}
\end{center}
\caption{The bijection $\Theta$ for the 2-Catalan arrangement of the root system of type $B_2$ with $M=\{H_{\alpha_2}^1,H_{2\alpha_1+\alpha_2}^2\}$.}
\end{figure}
    \begin{claim}\label{inlm}
     The region $R'$ is dominant and all hyperplanes in $M$ are ceilings of $R'$, that is $R'\in L(M)$, so $\Theta$ is well-defined.
     \begin{proof}
     The origin is contained in the Weyl chamber $w_0'(C)$ of $W'$.
     Thus no reflection in $W'$ fixes the origin. We can write $A'=w_0'(A)$ as $t_r\cdots t_1(A)$ where $t_i\in W'$ is a reflection in a facet of $t_{i-1}\cdots t_1(A)$ for all $i\in[r]$. In fact, if $w_0'=s_1'\cdots s_r'$ with $s_i'\in S'$ for all $i\in[r]$ is a reduced expression for $w_0'$ in $W'$, we can take $t_i=s_1'\cdots s_{i-1}'s_i's_{i-1}'\cdots s_1'$.
     So $t_i\cdots t_1(A)$ and $t_{i-1}\cdots t_1(A)$ are on the same side of every hyperplane in the affine Coxeter arrangement of $\Phi$ except for the reflecting hyperplane of $t_i$. Since $t_i$ does not fix the origin, if $t_{i-1}\cdots t_1(A)$ is dominant, then so is $t_{i}\cdots t_1(A)$. Thus by induction on $i$, the alcove $A'$ is dominant, so $R'$ is dominant.\\
     \\     
      Consider the Coxeter arrangement of $W'$, which is the hyperplane arrangement given by the reflecting hyperplanes of all the reflections in $W'$.
      The action of $W'$ on $V$ restricts to an action on the set of these hyperplanes. Since $H^{i_1}_{\alpha_1},\ldots,H^{i_m}_{\alpha_m}$ support facets of $A$, $w_0'(H^{i_1}_{\alpha_1}),\ldots,w_0'(H^{i_m}_{\alpha_m})$ support facets of $A'=w_0'(A)$.
      Now the set $\{w_0'(H^{i_1}_{\alpha_1}),\ldots,w_0'(H^{i_m}_{\alpha_m})\}$ is the set of walls of $w_0'(C)$ in the Coxeter arrangement of $W'$, so it equals the set $M=\{H^{i_1}_{\alpha_1},\ldots,H^{i_m}_{\alpha_m}\}$.
      Since all hyperplanes in $M$ are floors of $A$, and $A'$ is on the other side of each of them, they are all ceilings of $A'$.
      Thus they are ceilings of $R'$.
     \end{proof}
    \end{claim}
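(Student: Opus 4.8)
The plan is to check the two assertions of the claim separately, using the data already assembled in the construction of $\Theta$: the minimal alcove $A$ of $R$ satisfies $A\subseteq C$, its image $A'=w_0'(A)$ satisfies $A'\subseteq w_0'(C)=\{x:\langle x,\alpha_j\rangle<i_j\text{ for all }j\}$, and $W'$ is finite. The one additional elementary remark I need is that the origin lies in the open chamber $w_0'(C)$ as well, since $\langle 0,\alpha_j\rangle=0<i_j$ for each $j$.

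To see that $R'$ is dominant, note first that, because the open chambers of the finite reflection group $W'$ are exactly the connected components of the complement of the union of its reflecting hyperplanes, the origin --- lying in the open chamber $w_0'(C)$ --- lies on no reflecting hyperplane of $W'$. Now fix a reduced expression $w_0'=s_1'\cdots s_r'$ with each $s_i'\in S'$, set $u_i=s_1'\cdots s_i'$ and $t_i=u_{i-1}s_i'u_{i-1}^{-1}$, so that $t_i\cdots t_1=u_i$ and the alcoves $A=u_0(A),u_1(A),\dots,u_r(A)=A'$ form a sequence in which consecutive terms $u_{i-1}(A),u_i(A)$ are adjacent across the reflecting hyperplane of $t_i$ --- a wall of $u_{i-1}(A)$ because $H_{s_i'}$ is a wall of $A$. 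That hyperplane is a reflecting hyperplane of $W'$, hence is none of the $H_\beta^0$ with $\beta\in\Phi^+$, so consecutive alcoves in the sequence lie on the same side of every such $H_\beta^0$. Since $A\subseteq R$ is dominant, induction along the sequence shows that $A'$, and therefore $R'$, is dominant.

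For the ceilings, $w_0'\in W'$ carries the walls of $A$ to the walls of $A'$, so $\{w_0'(H_{\alpha_1}^{i_1}),\dots,w_0'(H_{\alpha_m}^{i_m})\}$ is the set of walls of the chamber $w_0'(C)$ in the Coxeter arrangement of $W'$. By the standard fact that conjugation by the longest element $w_0'$ is a diagram automorphism of $(W',S')$, this conjugation permutes the reflections of $S'$, hence $w_0'$ permutes their reflecting hyperplanes $H_{\alpha_1}^{i_1},\dots,H_{\alpha_m}^{i_m}$; thus the set of walls of $w_0'(C)$ is again $M$. So each $H_{\alpha_j}^{i_j}$ supports a facet of $A'$, and since $A'\subseteq R'$ and $R'$ is a single region of the $k$-Catalan arrangement, $H_{\alpha_j}^{i_j}$ supports a facet of $R'$ as well. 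Finally $A'\subseteq w_0'(C)$ gives $\langle x,\alpha_j\rangle<i_j$ for all $x\in A'$, and hence for all $x\in R'$ since $R'$ cannot cross $H_{\alpha_j}^{i_j}$ (a hyperplane of the arrangement, as $i_j\in[k]$); the origin satisfies $\langle 0,\alpha_j\rangle<i_j$ as well, so $H_{\alpha_j}^{i_j}$ does not separate $R'$ from the origin and is therefore a ceiling of $R'$. Hence $R'\in L(M)$, so $\Theta$ is well-defined.

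The step I expect to be the main obstacle is the dominance of $R'$: the alcove $A'$ is produced purely group-theoretically and carries no obvious geometric control, so one must build an explicit adjacency path from $A$ to $A'$ and argue it never crosses a linear hyperplane $H_\beta^0$; the clean way is the observation that every reflecting hyperplane of the finite group $W'$ misses the origin, because the origin lies in an open $W'$-chamber. The identity $\{w_0'(H_{\alpha_j}^{i_j})\}=M$ is the second delicate point, but it follows at once from the diagram-automorphism property of the longest element. The remaining bookkeeping --- promoting the assertion that a hyperplane of $M$ is a wall, indeed a ceiling, of the alcove $A'$ to the same assertion for the region $R'$ --- is then routine, using only that $R'$ sits inside a single region of the arrangement and that the hyperplanes of $M$ lie in that arrangement.
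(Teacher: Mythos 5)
Your proof is correct and follows essentially the same route as the paper's: the origin lies in the open chamber $w_0'(C)$, hence on no reflecting hyperplane of $W'$, so the gallery from $A$ to $A'=w_0'(A)$ built from a reduced word for $w_0'$ never crosses a hyperplane through the origin, giving dominance; and the walls of $w_0'(C)$ are again the hyperplanes of $M$, making them ceilings of $A'$ and hence of $R'$. The only cosmetic difference is that you justify $\{w_0'(H_{\alpha_j}^{i_j})\}=M$ via the longest-element diagram automorphism of $(W',S')$, where the paper simply reads it off as the set of walls of the explicitly described chamber $w_0'(C)$ --- both are fine.
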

    We show that $\Theta$ is a bijection by exhibiting its inverse $\Psi$, a map from $L(M)$ to $U(M)$.
    Suppose $R'\in L(M)$. Let $B$ be the alcove in $R'$ given by Theorem \ref{ind=ceil}.
    Let $R''$ be the region that contains $B'=w_0'(B)$.
    Similarly to the proof of Claim \ref{inlm}, we have that $R''\in U(M)$.
    So let $\Psi(R')=R''$.
    \begin{claim}
     The maps $\Theta$ and $\Psi$ are inverse to each other, so $\Theta$ is a bijection.
     \begin{proof}
      Suppose $R\in U(M)$, $R'=\Theta(R)$ and $R''=\Psi(R')$. Use the same notation as above for the alcoves $A,A',B$ and $B'$. Suppose for contradiction that $R''\neq R$. Then there is a hyperplane $H=H^r_{\alpha}$ of the $k$-Catalan arrangement that separates $R$ and $R''$. So $H$ separates $A$ and $B'$. Now $A$ and $B'$ are in the dominant Weyl chamber of $W'$, so they are on the same side of each reflecting hyperplane of $W'$. Thus $H$ is not a reflecting hyperplane of $W'$.
      Now we may write $A'$ as $t_r\cdots t_1(A)$, where $t_i\in W'$ is a reflection in a facet of $t_{i-1}\cdots t_1(A)$ for all $i\in[r]$. So $t_i\cdots t_1(A)$ and $t_{i-1}\cdots t_1(A)$ are on the same side of every hyperplane in the affine Coxeter arrangement, except for the reflecting hyperplane of $t_i$, which cannot be $H$. Thus by induction on $i$, the alcove $A'$ is on the same side of $H$ as $A$. Similarly $B$ is on the same side of $H$ as $B'$. So $A'$ and $B$ are on different sides of $H$, a contradiction, as they are contained in the same region, namely $R'$.
      Thus $\Psi(\Theta(R))=R''=R$, so $\Psi\circ\Theta=id$. Similarly $\Theta\circ\Psi=id$, so $\Theta$ and $\Psi$ are inverse to each other, so $\Theta$ is a bijection.
     \end{proof}
    \end{claim}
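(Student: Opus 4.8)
The plan is to reduce at once to the case of irreducible $\Phi$. If $\Phi=\Phi_1\times\cdots\times\Phi_p$ with each $\Phi_i$ irreducible, then the $k$-Catalan arrangement, its dominant regions, and their floors and ceilings all decompose as products; $M$ splits as a disjoint union $M_1\sqcup\cdots\sqcup M_p$ with $M_i$ supported on $\Phi_i$; and correspondingly $\mfl=\prod_i U(M_i)$ and $\mcl=\prod_i L(M_i)$, so it suffices to biject each factor and take the product. The case $m=0$ is immediate, and for $m>n$ both $\mfl$ and $\mcl$ are empty (an alcove, being a simplex with $n+1$ facets, has at most $n$ floors and at most $n$ ceilings), so we may assume $\Phi$ irreducible and $1\le m\le n$.

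To construct $\Theta$, start from $R\in\mfl$ and let $A=w(A_{\circ})$ be the minimal alcove of $R$. Since the floors of $R$ are exactly the floors of $A$, each $H_{\alpha_j}^{i_j}\in M$ supports a facet of $A$, so the reflections $s_{\alpha_j}^{i_j}$ all lie in $wS_aw^{-1}$; hence $S':=\{s_{\alpha_1}^{i_1},\ldots,s_{\alpha_m}^{i_m}\}=wJw^{-1}$ for some $J\subsetneq S_a$ and $W':=\langle S'\rangle$ is a finite parabolic subgroup of $W_a$ with Coxeter generating set $S'$. Because $R\in\mfl$, each hyperplane of $M$ separates $R$ from the origin, so $A$ lies in the $W'$-chamber $C=\{x\in V\mid\langle x,\alpha_j\rangle>i_j\text{ for all }j\in[m]\}$. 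Let $w_0'$ be the longest element of $(W',S')$; it permutes $S'$, hence $M$, and sends $C$ to the opposite $W'$-chamber $w_0'(C)=\{x\in V\mid\langle x,\alpha_j\rangle<i_j\text{ for all }j\}$, which is the side of every hyperplane of $M$ that contains the origin. Set $\Theta(R):=R'$, the region of the $k$-Catalan arrangement containing $A':=w_0'(A)$.

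Two things then have to be checked, both essentially routine. First, $R'\in\mcl$: write a reduced word $w_0'=s_1'\cdots s_r'$ in $S'$ and put $t_i:=s_1'\cdots s_{i-1}'s_i's_{i-1}'\cdots s_1'$, a reflection in a facet of $t_{i-1}\cdots t_1(A)$. Its reflecting hyperplane is one of $W'$ and hence misses the origin, since the origin lies in the $W'$-chamber $w_0'(C)$ and so is fixed by no reflection of $W'$; thus applying the $t_i$ one at a time preserves dominance, whence $A'$, and therefore $R'$, is dominant. And $w_0'$ carries the wall set $M$ of $C$ onto the wall set $M$ of $w_0'(C)$ with $A'$ on the origin's side of each wall, so each $H_{\alpha_j}^{i_j}$ is a wall of $A'$ not separating it from the origin, hence a ceiling of $A'$ and therefore of $R'$. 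Second, the inverse $\Psi\colon\mcl\to\mfl$ is defined the same way, but starting from the alcove $B\subseteq R'$ supplied by Theorem~\ref{ind=ceil}, whose ceilings are exactly the ceilings of $R'$: then $M$ consists of ceilings of $B$, so $B$ lies in $w_0'(C)$, and $\Psi(R')$ is the region containing $w_0'(B)$, which lies in $\mfl$ by the same dominance-and-walls argument.

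The substantive part is verifying $\Psi\circ\Theta=\mathrm{id}$ and $\Theta\circ\Psi=\mathrm{id}$. Take $R\in\mfl$, $R'=\Theta(R)$, $R''=\Psi(R')$, with alcoves $A,A',B,B'=w_0'(B)$ as above, and suppose some $k$-Catalan hyperplane $H$ separates $R$ from $R''$, i.e.\ separates $A$ from $B'$. Both $A$ and $B'$ lie in the $W'$-chamber $C$ — $A$ because $R\in\mfl$, and $B'$ because $B\subseteq R'\in\mcl$ forces $B\subseteq w_0'(C)$, hence $B'=w_0'(B)\subseteq C$ — so no reflecting hyperplane of $W'$ separates them, and in particular $H$ is not one. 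But running $w_0'$ through the facet-reflections $t_i$, which cross only reflecting hyperplanes of $W'$, shows that $A'=w_0'(A)$ is on the same side of $H$ as $A$ and that $B=w_0'(B')$ is on the same side of $H$ as $B'$; hence $A'$ and $B$ lie on opposite sides of $H$, contradicting $A',B\subseteq R'$. So $R''=R$, and the symmetric argument gives $\Theta\circ\Psi=\mathrm{id}$. I expect the real obstacle to be the built-in asymmetry of the problem: the floors of a dominant region are visible on its minimal alcove, which always exists, whereas its ceilings call for a maximal alcove, which exists only for bounded regions — so the scheme closes up only because Theorem~\ref{ind=ceil} provides, for an arbitrary dominant region, an alcove (the ``pseudomaximal'' one) reproducing its ceilings; securing that substitute is the genuine work, and it is already in hand.
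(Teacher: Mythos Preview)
The proposal is correct and takes essentially the same approach as the paper. You have in fact reproduced the entire surrounding argument for Theorem~\ref{bij} (the reduction to the irreducible case, the constructions of $\Theta$ and $\Psi$, and the well-definedness check that is the paper's Claim~\ref{inlm}), but the core mutual-inverse step---assuming a separating $k$-Catalan hyperplane $H$, observing that $A$ and $B'$ lie in the same $W'$-chamber $C$ so $H$ cannot be a reflecting hyperplane of $W'$, and then factoring $w_0'$ as a product of facet reflections to transport sidedness from $A,B'$ to $A',B$---matches the paper line for line.
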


    For any dominant alcove, at least one of its $n+1$ facets must either be a floor or contain the origin, and at least one must be a ceiling. So it has at most $n$ ceilings and at most $n$ floors. So any dominant region $R$ of the $k$-Catalan arrangement has at most $n$ ceilings and at most $n$ floors.
    Thus if $m>n$, both $U(M)$ and $L(M)$ are empty.
    This completes the proof in the case where $\Phi$ is irreducible.\\
    \\
    Now suppose $\Phi$ is reducible, say $\Phi=\Phi_1\amalg\Phi_2$ with $\Phi_1\perp\Phi_2$. So $V=V_1\oplus V_2$ with $V_1=\langle\Phi_1\rangle$ and $V_2=\langle\Phi_2\rangle$, and $V_1\perp V_2$.
    Then the regions of the $k$-Catalan arrangement of $\Phi$ are precisely the sets of the form $R_1\oplus R_2$ where $R_i$ is a region of the $k$-Catalan arrangement of $\Phi_i$ for $i=1,2$. The region $R_1\oplus R_2$ is dominant if and only if $R_1$ and $R_2$ are both dominant.
    A hyperplane $H_{\alpha}^r$ is a floor of $R_1\oplus R_2$ if and only if $H_{\alpha}^r$ is a floor of $R_i$ for some $i=1,2$.
    The same holds for ceilings. Say $M=M_1\amalg M_2$ with $H_{\alpha_j}^{i_j}\in M_i$ if $\alpha_j\in\Phi_i$ for $j\in[m]$ and $i=1,2$. Assume the theorem holds for $\Phi_1$ and $\Phi_2$, giving us bijections $\Theta_1$ and $\Theta_2$ for $\Phi_1$ together with $M_1$ and $\Phi_2$ together with $M_2$ respectively. 
    Then $\Theta(R_1\oplus R_2)=\Theta_1(R_1)\oplus \Theta_2(R_2)$ gives the required bijection for $\Phi$ together with $M$. This completes the proof by induction on the number of irreducible components of $\Phi$.
 \end{proof}
\section{Corollaries}
We deduce some enumerative corollaries of Theorem \ref{bij}.
For any set $M$ of hyperplanes of the $k$-Catalan arrangement, let $U_=(M)$ be the set of dominant regions $R$ of the $k$-Catalan arrangement such that the floors of $R$ are exactly the hyperplanes in $M$, and let $L_=(M)$ be the set of dominant regions $R'$ of the $k$-Catalan arrangement such that the ceilings of $R'$ are exactly the hyperplanes in $M$. 
%
\begin{corollary}\label{inex}
  For any set $M=\{H_{\alpha_1}^{i_1},H_{\alpha_2}^{i_2},\ldots,H_{\alpha_m}^{i_m}\}$ of $m$ hyperplanes with $i_j\in [k]$ and $\alpha_j\in\Phi^+$ for all $j\in[m]$, we have that $|U_=(M)|=|L_=(M)|$.
  \begin{proof}
   This follows from Theorem \ref{bij} by an application of the Principle of Inclusion and Exclusion.
  \end{proof}
\end{corollary}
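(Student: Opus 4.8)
The plan is to spell out the inclusion--exclusion argument precisely and then quote Theorem \ref{bij} term by term. Let $\mathcal{H}$ be the set of all hyperplanes $H_{\alpha}^{r}$ with $\alpha\in\Phi^{+}$ and $r\in[k]$; this set is finite. I would first observe that every floor and every ceiling of a dominant region of the $k$-Catalan arrangement lies in $\mathcal{H}$: a wall that is a floor or a ceiling does not contain the origin, which rules out $r=0$; the region being dominant forces $\alpha\in\Phi^{+}$; and $r>k$ never occurs for a hyperplane of the $k$-Catalan arrangement. Moreover a dominant region has only finitely many walls, so its set of floors (resp. ceilings) is a well-defined finite subset of $\mathcal{H}$.

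Next, for any $M\subseteq\mathcal{H}$ I would partition $U(M)$ and $L(M)$ according to the exact floor set, resp. ceiling set:
$$U(M)=\bigsqcup_{M\subseteq N\subseteq\mathcal{H}}U_=(N),\qquad L(M)=\bigsqcup_{M\subseteq N\subseteq\mathcal{H}}L_=(N),$$
these unions being disjoint because the floor set (resp. ceiling set) of a region is uniquely determined by the region. Taking cardinalities gives $|U(M)|=\sum_{M\subseteq N\subseteq\mathcal{H}}|U_=(N)|$ and $|L(M)|=\sum_{M\subseteq N\subseteq\mathcal{H}}|L_=(N)|$.

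I would then apply Möbius inversion in the Boolean lattice $2^{\mathcal{H}}$, whose Möbius function is $\mu(N,N')=(-1)^{|N'|-|N|}$, to obtain
$$|U_=(M)|=\sum_{M\subseteq N\subseteq\mathcal{H}}(-1)^{|N|-|M|}\,|U(N)|,\qquad |L_=(M)|=\sum_{M\subseteq N\subseteq\mathcal{H}}(-1)^{|N|-|M|}\,|L(N)|.$$
Finally, every $N\in 2^{\mathcal{H}}$ is a set of hyperplanes $H_{\alpha}^{i}$ with $i\in[k]$ and $\alpha\in\Phi^{+}$, so Theorem \ref{bij} furnishes a bijection $\Theta\colon U(N)\to L(N)$ and hence $|U(N)|=|L(N)|$. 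Substituting this equality termwise into the two inversion formulas yields $|U_=(M)|=|L_=(M)|$, as desired.

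There is no substantive obstacle beyond Theorem \ref{bij} itself; the only point requiring care is the finiteness observation of the first paragraph, which is what guarantees that the inclusion--exclusion sums are finite and that Möbius inversion over the Boolean lattice $2^{\mathcal{H}}$ is legitimate.
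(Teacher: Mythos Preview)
Your argument is correct and is precisely the inclusion--exclusion/M\"obius inversion that the paper invokes in one line; the only additional content you supply is the routine finiteness check on $\mathcal{H}$, which the paper leaves implicit. There is nothing to add.
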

\begin{corollary}\label{sum}
  For any tuple $(a_1,a_2,\ldots,a_k)$ of nonnegative integers, the number of dominant regions $R$ that have exactly $a_j$ floors of height $j$ for all $j\in[k]$ is the same as the number of dominant regions $R'$ that have exactly $a_j$ ceilings of height $j$ for all $j\in[k]$.
  \begin{proof}
   Sum Corollary \ref{inex} over all sets $M$ containing exactly $a_j$ hyperplanes of height $j$ for all $j\in[k]$.
  \end{proof}
\end{corollary}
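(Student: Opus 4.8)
The plan is to deduce Corollary~\ref{sum} from Corollary~\ref{inex} by partitioning the dominant regions according to their set of floors (respectively ceilings) and summing termwise.

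First I would record the elementary observation that every floor and every ceiling of a dominant region of the $k$-Catalan arrangement is a hyperplane $H_\alpha^r$ with $\alpha\in\Phi^+$ and $r\in[k]$: by definition a floor or ceiling does not contain the origin, which excludes $r=0$, while $r\in\{0,1,\ldots,k\}$ since the arrangement uses only these values. Fix a tuple $(a_1,a_2,\ldots,a_k)$ of nonnegative integers, and let $\mathcal{M}$ be the collection of all sets $M=\{H_{\alpha_1}^{i_1},\ldots,H_{\alpha_m}^{i_m}\}$ with $\alpha_j\in\Phi^+$ and $i_j\in[k]$ that contain exactly $a_j$ hyperplanes of height $j$ for every $j\in[k]$. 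Since a region determines its walls, hence its floors, each dominant region $R$ has a well-defined set of floors, and $R$ has exactly $a_j$ floors of height $j$ for all $j$ precisely when that set lies in $\mathcal{M}$. Hence the set of dominant regions with the prescribed floor profile is the disjoint union $\bigsqcup_{M\in\mathcal{M}}U_=(M)$, and its cardinality is $\sum_{M\in\mathcal{M}}|U_=(M)|$; the same reasoning with ceilings in place of floors shows the number of dominant regions with the prescribed ceiling profile equals $\sum_{M\in\mathcal{M}}|L_=(M)|$.

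It then remains to apply Corollary~\ref{inex}, which yields $|U_=(M)|=|L_=(M)|$ for each individual $M\in\mathcal{M}$, and sum over $M\in\mathcal{M}$ to obtain the desired equality. The one point meriting a line of justification is that $\mathcal{M}$ is a finite index set, so that these sums are legitimate; this is clear because $\Phi^+$ is finite, so for each height $j\in[k]$ there are only finitely many hyperplanes $H_\alpha^j$ and hence only finitely many admissible $M$.

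I do not expect a genuine obstacle here: all of the substance has already been invested in Theorem~\ref{bij} and, through it, in Corollary~\ref{inex} (via inclusion--exclusion). The only thing to be careful about is the bookkeeping ensuring that each dominant region is counted in exactly one $U_=(M)$ and in exactly one $L_=(M)$, which is immediate from the fact that a region's floors and ceilings are uniquely determined.
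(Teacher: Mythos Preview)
Your proposal is correct and follows exactly the same approach as the paper: sum the equality $|U_=(M)|=|L_=(M)|$ from Corollary~\ref{inex} over all sets $M$ with the prescribed height profile. Your write-up simply spells out the bookkeeping (that the $U_=(M)$ and $L_=(M)$ partition the relevant regions, and that the index set is finite) which the paper leaves implicit.
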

  \begin{proof}[Proof of Corollary \ref{arm}]
   Set $a_r=l$ and sum Corollary \ref{sum} over all choices of $a_j$ for all $j\neq r$.
  \end{proof}
%

\section{The Panyushev complement}

In the special case where $k=1$, a geometric chain of ideals $\I$ is simply the single ideal $I_1$, similarly a geometric chain of order filters $\J$ is just the single order filter $J_1$.
The indecomposable elements of an ideal $I$ are then just its maximal elements \cite[Lemma 3.9]{athanasiadis06cluster}.
The indecomposable elements of an order filter $J$ are just its minimal elements \cite[Lemma 3.9]{athanasiadis05refinement} \cite[Lemma 1]{thiel13hf}.\\
\\
There is a natural bijection between ideals and antichains of any poset that sends an ideal to the set of its maximal elements.
Similarly, there is a natural bijection between order filters and antichains that sends an order filter to the set of its minimal elements.\\
\\
So for an ideal $I$ in the root poset of $\Phi$, we define the Panyushev complement $\mathbf{Pan}(I)$ as the ideal generated by the minimal elements of the order filter $J=\Phi^+\backslash I$. From the above considerations, this is a bijection from the set of order ideals of the root poset of $\Phi$ to itself.\\
\\
For a region $R$ of the Catalan arrangement, let 
$$CL(R)=\{\alpha\in\Phi^+\mid H_{\alpha}^1\text{ is a ceiling of $R$}\}\text{ and}$$
$$FL(R)=\{\alpha\in\Phi^+\mid H_{\alpha}^1\text{ is a floor of $R$}\}\text{.}$$
Since a region $R$ in the Catalan arrangement corresponds to a unique ideal $I=\theta(R)$, which corresponds uniquely to the set of its maximal elements, which equals $CL(R)$ by Theorem \ref{ind=ceil}, the map $CL:R\mapsto CL(R)$ gives a bijection from the set of dominant regions in the Catalan arrangement to the set of antichains in the root poset.
That the same holds for the map $FL:R\mapsto FL(R)$ follows from an analogous argument that can already be deduced from \cite[Theorem 3.11]{athanasiadis05refinement}.
\begin{figure}[h]
\begin{center}
 \includegraphics{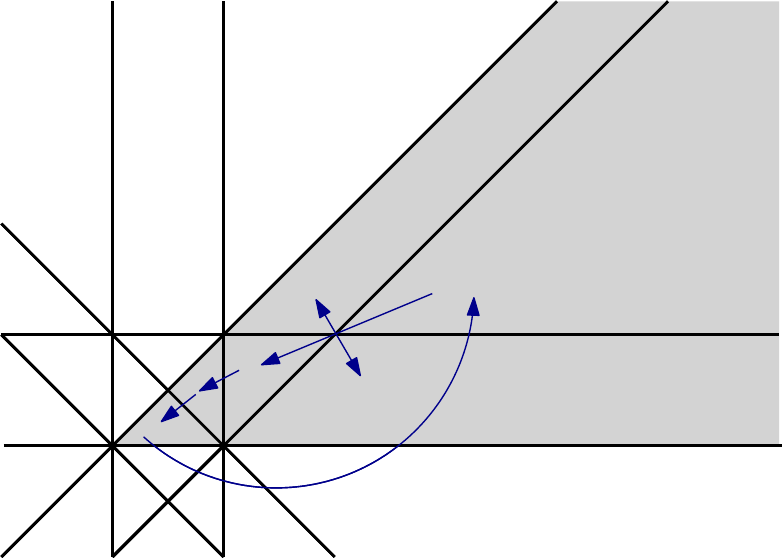}\\
\end{center}

\caption{The action of $\theta^{-1}\circ\mathbf{Pan}\circ\theta=CL^{-1}\circ FL$ on the dominant regions of the Catalan arrangement of the root system of type $B_2$.}
\end{figure}
\begin{theorem}\label{pan}
 For an ideal $I$ in the root poset of $\Phi$, the region $\theta^{-1}(\mathbf{Pan}(I))$ is the unique region of the Catalan arrangement of $\Phi$ whose ceilings are exactly the floors of the region $\theta^{-1}(I)$.
 \begin{proof}
  The set $CL(\theta^{-1}(\mathbf{Pan}(I)))$ is the set of maximal elements of $\mathbf{Pan}(I)$, which equals the set of minimal elements of $J=\Phi^+\backslash I$, which equals $FL(\theta^{-1}(I))$.
  Since $CL$ is a bijection, $\theta^{-1}(\mathbf{Pan}(I))$ is the only region $R'$ with $CL(R')=FL(\theta^{-1}(I))$.
 \end{proof}

\end{theorem}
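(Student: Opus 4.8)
The plan is to run the set $FL(\theta^{-1}(I))$ through the chain of identifications supplied by Theorem \ref{ind=ceil}, by the definition of $\mathbf{Pan}$, and by the $k=1$ specialisations recalled just above, and then to conclude via the bijectivity of $CL$.

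Concretely, I would first invoke Theorem \ref{ind=ceil} in the case $k=1$: for any ideal $K$ in the root poset, $CL(\theta^{-1}(K))$ equals the set of rank $1$ indecomposable elements of $K$, and by \cite[Lemma 3.9]{athanasiadis06cluster} these are precisely the maximal elements of $K$. Applying this to $K=\mathbf{Pan}(I)$ identifies $CL(\theta^{-1}(\mathbf{Pan}(I)))$ with the antichain of maximal elements of $\mathbf{Pan}(I)$.

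The second step is to recognise this antichain. Since $\mathbf{Pan}(I)$ is by definition the ideal generated by the minimal elements of the order filter $J=\Phi^+\setminus I$, and since an ideal generated by an antichain has that antichain as its set of maximal elements, the maximal elements of $\mathbf{Pan}(I)$ are exactly the minimal elements of $J$. On the other hand, \cite[Theorem 3.11]{athanasiadis05refinement} (specialised to $k=1$, via \cite[Lemma 3.9]{athanasiadis05refinement}) says that the floors of $\theta^{-1}(I)=\psi(J)$ are the hyperplanes $H^1_\alpha$ with $\alpha$ a minimal element of $J$; that is, $FL(\theta^{-1}(I))$ is exactly the set of minimal elements of $J$. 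Combining the two steps yields $CL(\theta^{-1}(\mathbf{Pan}(I)))=FL(\theta^{-1}(I))$.

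Finally, because $CL$ is a bijection from the dominant regions of the Catalan arrangement of $\Phi$ onto the antichains of the root poset, the region $\theta^{-1}(\mathbf{Pan}(I))$ is the unique region $R'$ with $CL(R')=FL(\theta^{-1}(I))$, which is exactly the assertion. I do not expect a genuine obstacle: the statement is essentially a diagram chase, and the only points needing care are the two $k=1$ collapses of ``rank $1$ indecomposable element'' to ``maximal element of an ideal'' and to ``minimal element of an order filter'', both of which are already in the cited literature.
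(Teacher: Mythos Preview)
Your argument is correct and follows exactly the same chain of identifications as the paper's proof: $CL(\theta^{-1}(\mathbf{Pan}(I)))=\max(\mathbf{Pan}(I))=\min(J)=FL(\theta^{-1}(I))$, followed by uniqueness from the bijectivity of $CL$. You have simply made each step's justification explicit where the paper leaves it implicit.
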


We could rephrase Theorem \ref{pan} as $\mathbf{Pan}=\theta\circ\ CL^{-1}\circ FL\circ\theta^{-1}$. The fact that the Panyushev complement has a natural interpretation in terms of the dominant regions of the Catalan arrangement may serve to explain why it seems to be of particular interest for root posets.

\section{Acknowledgements}
I thank Myrto Kallipoliti, Henri M{\"u}hle, Vivien Ripoll and Eleni Tzanaki for helpful comments and suggestions.
I also wish to express my gratitude to the anonymous referee for a very careful reading of the manuscript.

\bibliographystyle{alpha}
\bibliography{literature}

%
%

\end{document}